\documentclass[10pt, reqno]{amsart}
\usepackage{lipsum}
\usepackage{a4wide}
\usepackage{amssymb} 
\usepackage{amsmath}
\usepackage{amsthm} 
\usepackage{amsmath, amssymb, dsfont}
\usepackage{tikz-cd} 
\usepackage[all]{xy}
\usepackage{amscd}
\usepackage{hyperref}
\usepackage{enumitem}
\hypersetup{colorlinks,linkcolor={blue},citecolor={blue},urlcolor={black}}
\theoremstyle{plain}
\numberwithin{equation}{section}

\newcommand{\ind}{\operatorname{ind}}
\newcommand{\Ind}{\operatorname{Ind}}

\newcommand{\End}{\operatorname{End}}

\newtheorem{theorem}{Theorem}[subsection]
\newtheorem{corollary}[theorem]{Corollary} 
\newtheorem{lemma}[theorem]{Lemma}
\newtheorem{remark}[theorem]{Remark}

\newtheorem{definition}[theorem]{Definition}

\title{Asai Gamma Factors and Distinction in families}
\author{Sabyasachi Dhar and Hariom Sharma}
\date{\today}
\begin{document}
\begin{abstract}
Let $F$ be a finite extension of $\mathbb{Q}_p$ and let $E$ be a quadratic extension of $F$. A representation $(\pi,V)$ of ${\rm GL}_n(E)$ is said to be ${\rm GL}_n(F)$-distinguished if there exists a non-zero linear functional $\phi$ on $V$ such that $\phi(\pi(h)v) = \phi(v)$ for all $h \in {\rm GL}_n(F)$ and $v \in V$. In this article, we study the notion of ${\rm GL}_n(F)$-distinguished representations for $R[{\rm GL}_n(E)]$ modules of Whittaker type, where $R$ is a Noetherian algebra over the ring of Witt vectors of $\overline{\mathbb{F}}_\ell$ with $\ell \ne p$. We first derive a functional equation, which gives the existence of the Asai $\gamma$-factors associated with $R[{\rm GL}_n(E)]$ modules of Whittaker type. We then provide a necessary condition for cuspidal $R[{\rm GL}_n(E)]$ modules of Whittaker type to be Whittaker ${\rm GL}_n(F)$-distinguished, expressed in terms of their Asai $\gamma$-factors.
\end{abstract}
\maketitle
\section{Introduction}
Let $G$ be a group and let $H$ be a subgroup of $G$. A complex representation $(\pi,V)$ of $G$ is said to be $H$-distinguished if there is a non-zero linear form $\lambda:V\rightarrow \mathbb{C}$ which is $H$-equivariant, i.e.,  $\lambda(\pi(h)) v = \lambda(v)$
for $h\in H$ and $v\in V$. 
One of the fundamental problems in representation theory is to classify irreducible representations of $G$ that are $H$-distinguished, in terms of certain data attached to these representations. Usually, one considers this when $H$ is the subgroup of the fixed points of an involution of $G$. 
In this article, our discussion will focus on the situation where $G = {\rm GL}_n(E)$ and $H={\rm GL}_n(F)$ with $E$ being a quadratic extension of a $p$-adic field $F$, as considered in \cite{Flicker_distinguished, Dipendra_distinguished}. There is a characterization of discrete series representations of ${\rm GL}_n(E)$ being ${\rm GL}_n(F)$-distinguished, in terms of the poles of Asai $L$-factors attached to these representations, due to \cite[Theorem 4]{Kable_Asai} and \cite[Theorem 1.4]{Anand_Kable_Asai}. For various aspects of distinguished representations, especially in connection with Asai $L$-functions and Asai  $\gamma$-factors, we refer to \cite{Flicker_BSMF_1, Flicker_Asai, Kable_Asai, Anand_Kable_Asai, Anand_IMRN, Anand_IMRN_Asai, Nadir_IMRN}.

Recently, motivated by the study of congruences between automorphic representations, the representation theory over the field $\mathbb{C}$ has been extended to representations with coefficients in other fields (see \cite{Vigneras_modl_book}) and then further to more general rings (see \cite{LLC_families}), in which $p$ is invertible. In recent years, there has been a lot of interest in establishing several representation theoretic aspects over general rings, which are already known over $\mathbb{C}$; for a survey, we refer to \cite{Helm_Bernstein_Center, Helm_Whittaker, Helm_Moss_converse, DHMK_paper}. For $\ell$-modular cuspidal representations of ${\rm GL}_n(E)$ with $\ell \ne p$, ${\rm GL}_n(F)$-distinguishness has been studied by \cite{Nadir_Kurinczuk_mod_l_Asai, mod_l_distinguished}. However, distinguished representations over general rings are completely unexplored. The aim of this article is to study the ${\rm GL}_n(F)$-distinguished representations of ${\rm GL}_n(E)$ over a Noetherian $W(\overline{\mathbb{F}}_\ell)$-algebra $R$, where $W(\overline{\mathbb{F}}_\ell)$ is the ring of Witt vectors of $\overline{\mathbb{F}}_\ell$ with $\ell \not= p$. We focus on certain $R[{\rm GL}_n(E)]$ modules, namely Whittaker type, introduced by Emerton and Helm (\cite{LLC_families})--which is a generalization of generic representations over fields.

To state the main results of this article, some notations are in order. Let $\Lambda$ be the ring of Witt vectors of $\overline{\mathbb{F}}_\ell$, where $\ell$ is an odd prime different from $p$. Let $R$ be a Noetherian $\Lambda$-algebra. Let $V$ be a smooth $R[{\rm GL}_n(E)]$ module, where smooth means the stabilizer of every element in $V$ is an open subgroup of ${\rm GL}_n(E)$. Assume that $V$ is of Whittaker type (see $\S$\ref{def_co-Whittaker} for the definition). There is a Whittaker model associated with $V$, denoted $\mathbb{W}(V,\psi_E)$, which is a sub-$R[{\rm GL}_n(E)]$ module of ${\rm Ind}_{N_n(E)}^{{\rm GL}_n(E)}(\psi_E)$, where $N_n(E)$ is the group of unipotent upper triangular matrices in ${\rm GL}_n(E)$ and $\psi_E$ is a non-degenerate character of $N_n(E)$ that is trivial on $N_n(F)$.  We have a ${\rm GL}_n(E)$-equivariant surjection $V\rightarrow \mathbb{W}(V,\psi_E)$ but it is not necessarily an isomorphism, unlike in the classical setting. 

In the works of \cite{Kable_Asai, Anand_Kable_Asai}, they provide a characterization of discrete series complex representations of ${\rm GL}_n(E)$ being ${\rm GL}_n(F)$-distinguished, in terms of poles of Asai $L$-functions, constructed independently by Flicker (\cite{Flicker_Asai}) and Kable (\cite{Kable_Asai}) via establishing a functional equation for certain Zeta integrals similar to the Rankin--Selberg theory. In general, for a family of ${\rm GL}_n(F)$-representations of Whittaker type, the notion of $L$-function is not well-defined (see \cite[Section 1, P. 1790]{Moss_local_constants}). However, there is a notion of $\gamma$-factor associated with $R[{\rm GL}_n(F)]$ modules of Whittaker type, introduced by \cite{Moss_IMRN, Moss_local_constants, Moss_Nadir_Kirillov}--which are well behaved with respect to the base change of the coefficient rings. In this paper, we first derive a functional equation similar to \cite[Appendix, Theorem (iii)]{Flicker_Asai} for $R[{\rm GL}_n(E)]$ modules of Whittaker type, which gives us the existence of Asai $\gamma$-factors in families. Let us state the result more precisely.

For $W\in\mathbb{W}(V,\psi_E)$ and $\varphi\in C_c^\infty(F^n,R)$, we define a Laurent series with an indeterminate $X$, namely $I(X,W,\varphi)$ (see Section \ref{Asai_Zeta} for the definition), which belongs to the fraction ring $S^{-1}(R[X,X^{-1}])$, where $S$ is the multiplicative subset of $R[X,X^{-1}]$ consisting of Laurent polynomials whose first and last coefficients are units in $R$. Similarly, we have $I(X,\widetilde{W},\widehat{\varphi})$, where $\widetilde{W}(g) = W(w_n \,^tg^{-1})$ for $g\in {\rm GL}_n(E)$, and $\widehat{\varphi}$ is the Fourier transform with respect to a self-dual Haar measure on $F^n$. The first main result of this article is as follows.
\begin{theorem}\label{func_equ_intro}
Let $E$ be a quadratic extension of the $p$-adic field $F$. Let $R$ be a Noetherian $\Lambda$-algebra, and let $S$ be the multiplicative subset of $R[X,X^{-1}]$ consisting of Laurent polynomials whose first and last coefficients are units in $R$. Let $V$ be an $R[{\rm GL}_n(E)]$ module of Whittaker type. Then there exists a unique element $\gamma_{As}(X,V,\psi_E)\in S^{-1}(R[X,X^{-1}])$ such that 
$$ I(q^{-1}X^{-1},\widetilde{W},\widehat{\varphi}) = \omega_{V}(-1)^{n-1}\,\gamma_{As}(X,V,\psi_E)\,I(X,W,\varphi), $$
for all $W\in\mathbb{W}(V,\psi_E)$ and $\varphi\in C_c^\infty(F^n,R)$. Here, $\omega_V$ is the central character of $V$.
\end{theorem}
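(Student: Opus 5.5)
We follow the strategy of Moss for Rankin--Selberg $\gamma$-factors in families: reduce to a multiplicity-one statement for a space of twisted-invariant trilinear forms, and then use the universal case to pin down the constant.

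\textbf{Step 1 (rationality).} First we check that $I(X,W,\varphi)$ and $I(q^{-1}X^{-1},\widetilde W,\widehat\varphi)$ lie in $S^{-1}(R[X,X^{-1}])$. For this we use the Iwasawa decomposition of $N_n(F)\backslash{\rm GL}_n(F)$ together with the asymptotic expansion of $W$ restricted to the diagonal torus. This expansion is available for modules of Whittaker type via the finiteness of Jacquet modules; by Emerton--Helm these are finitely generated admissible over $R$. As a result, the coefficient of $X^m$ in $I(X,W,\varphi)$ vanishes for $m\ll0$, and for $m\gg0$ it satisfies a linear recurrence. Summing the recurrence, whose characteristic polynomial has unit leading and constant terms because the Jacquet-module operators are invertible, gives an element of $S^{-1}(R[X,X^{-1}])$. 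This is the usual argument for Rankin--Selberg integrals in families, adapted to the group ${\rm GL}_n(F)$ acting on $\mathbb{W}(V,\psi_E)$.

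\textbf{Step 2 (invariance).} Let $\mathcal{R}=S^{-1}(R[X,X^{-1}])$. Both $(W,\varphi)\mapsto I(X,W,\varphi)$ and $(W,\varphi)\mapsto I(q^{-1}X^{-1},\widetilde W,\widehat\varphi)$ define elements of
$$\mathrm{Hom}_{R[{\rm GL}_n(F)]}\big(\mathbb{W}(V,\psi_E)\otimes C_c^\infty(F^n,R),\ \mathcal{R}\otimes\chi_X\big),$$
where $\chi_X(h)=X^{-v_F(\det h)}$ (up to the normalization of the paper). The invariance of the first form is a change of variables $g\mapsto gh$. For the second form we compute how $\widetilde{\ }$ and Fourier transform intertwine $h$ with ${}^th^{-1}$; the factor $\omega_V(-1)^{n-1}$ arises from the $w_n$ in the definition of $\widetilde W$.

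\textbf{Step 3 (multiplicity one, the main obstacle).} We show that this Hom space is free of rank one over $\mathcal{R}$, generated by $I(X,\cdot,\cdot)$. The plan is a Bernstein--Zelevinsky filtration argument on $C_c^\infty(F^n,R)$:
$$0\to C_c^\infty(F^n\setminus\{0\},R)\to C_c^\infty(F^n,R)\to R\to0,$$
with $C_c^\infty(F^n\setminus\{0\})\cong\operatorname{ind}_{P_n(F)}^{{\rm GL}_n(F)}$, where $P_n$ is the mirabolic subgroup. By Frobenius reciprocity, forms on the first piece become $\chi_X$-twisted $P_n(F)$-invariant forms on $\mathbb{W}(V,\psi_E)$. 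The contribution of the quotient $R$ is killed because $\chi_X$ is non-trivial against the finitely many central exponents, which are units after inverting $S$.

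For the $P_n(F)$-invariant forms, we filter $\mathbb{W}(V,\psi_E)|_{P_n(E)}$ by the derivative filtration. Emerton--Helm's Whittaker-type condition, namely that the top derivative is free of rank one over $R$, shows that only the bottom piece $\operatorname{ind}_{N_n(E)}^{P_n(E)}\psi_E$ contributes. Mackey theory for $P_n(F)\backslash P_n(E)/N_n(E)$ then leaves only the open orbit, since $\psi_E$ is trivial on $N_n(F)$. That orbit gives $\mathcal{R}$-rank one, while the contributions of the higher derivatives vanish after tensoring with $\mathcal{R}$, again by the unit-coefficient argument.

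I expect this step to be the hardest, because exactness of Hom and the vanishing steps must be controlled over a general Noetherian $R$ rather than over a field. If a direct argument fails, the fallback is to prove the statement for the universal co-Whittaker module over the integral Bernstein centre, where the relevant ring is reduced and flat over $\Lambda$, so that one can reduce to fields. The general case would then follow by base change, using that $\mathbb{W}$ and $I$ commute with $R\to R'$.

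\textbf{Step 4 (conclusion).} Multiplicity one gives $\gamma_{As}\in\mathcal{R}$ satisfying the stated identity. Uniqueness follows by exhibiting $W,\varphi$ with $I(X,W,\varphi)=1$. We choose $W$ supported near the identity on the mirabolic subgroup, using the Kirillov-type result that $\mathbb{W}(V,\psi_E)$ contains $\operatorname{ind}_{N_n(E)}^{P_n(E)}\psi_E$, and take $\varphi$ to be the characteristic function of a small lattice neighbourhood of $e_n$.
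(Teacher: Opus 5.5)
Your outline follows the paper's proof step for step. You prove rationality through the translation operators on the torus and Jacquet-module finiteness, and you check $\chi_X$-equivariance of both zeta series. Multiplicity one then goes through $C_c^\infty(F^n\setminus\{0\},R)\subset C_c^\infty(F^n,R)$, Frobenius reciprocity down to twisted $P_n(F)$-invariant forms, and the derivative filtration with only $\tau_n=\operatorname{ind}_{N_n(E)}^{P_n(E)}\psi_E$ contributing. Finally, the unit $\omega_V(\varpi_F)-X^{-n}\in S$ kills forms of the shape $\xi(W)\varphi(0)$. Your uniqueness argument, choosing $(W,\varphi)$ with $I(X,W,\varphi)=1$ via $\operatorname{ind}_{N_n(E)}^{P_n(E)}\psi_E\subseteq\mathbb{W}(V,\psi_E)|_{P_n(E)}$, supplies a point the paper leaves implicit. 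You are also right to work over $S^{-1}(R[X,X^{-1}])$ rather than over $R$. Incidentally, $\omega_V(-1)^{n-1}$ does not come out of the invariance computation. One has $\widetilde{g\cdot W}={}^tg^{-1}\cdot\widetilde W$ and $\widehat{g\cdot\varphi}=|\det g|^{-1}\,{}^tg^{-1}\cdot\widehat\varphi$, the $q^{-1}$ in $q^{-1}X^{-1}$ absorbs $|\det g|^{-1}$, and the sign is only a normalising unit.

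The step that is wrong as written is the Mackey analysis of $\tau_n|_{P_n(F)}$: the surviving orbit is not open. For $n=2$, $N_2(E)\backslash P_2(E)\cong E^\times$, and the $P_2(F)$-orbits are the cosets $aF^\times$, all closed with empty interior. For $n\ge 3$, consider the first Mackey step, $G_{n-1}(F)$ acting on $E^{n-1}\setminus\{0\}$. Its open orbit is $\{x+\delta y : x,y\in F^{n-1}\ \text{linearly independent}\}$. There the conjugated character $z\mapsto\psi_E(\delta\, yz)$ of $U_n(F)$ is non-trivial, so the open orbit is precisely one that contributes nothing. The mechanism the paper uses, following Flicker, is different. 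Twisted-invariant forms are supported on the orbits where the conjugate of $\psi_E$ is trivial on the $F$-points of the unipotent stabiliser, because $1-\zeta\in\Lambda^\times$ for every non-trivial $p$-power root of unity $\zeta$. That leaves only the closed orbit through the identity. The generator is obtained by restricting to $N_n(F)\backslash P_n(F)$ and integrating, not by extending from an open orbit.

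Likewise, the rank-one condition on the top derivative only identifies $\tau_n$. The vanishing for $\tau_i$, $i<n$, comes from the central-action argument that you invoke at the end of Step 3.

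Finally, your fallback through the universal co-Whittaker module would not reach all modules of Whittaker type. A reducible principal series of $G_2(E)$ with generic subrepresentation and one-dimensional quotient is of Whittaker type and equals its own Whittaker space. Its Whittaker space is not that of any co-Whittaker module, so the direct argument is the one to carry out.
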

The element $\gamma_{As}(X,V,\psi_E)$ is called the Asai $\gamma$-factor associated with $V$, and it is compatible with the base change of the coefficient rings in the sense of \cite[Proposition 4.7]{Moss_local_constants}. For the proof of the above theorem, we closely follow the ideas of \cite{Flicker_Asai} and \cite{Moss_IMRN, Moss_Nadir_Kirillov}. When $E=F\times F$ and $R = A \otimes_{\Lambda} B$, where $A$ and $B$ are two Noetherian $\Lambda$-algebras, then Theorem \ref{func_equ_intro} gives a functional equation for a pair of $A[{\rm GL}_n(F)]$ and $B[{\rm GL}_n(F)]$ modules of Whittaker type, which completes the Rankin--Selberg theory of \cite[Theorem $5.3$]{Moss_Nadir_Kirillov} for $m=n$. 

For $R=\overline{\mathbb{F}}_\ell$, the notion of Asai $L$-functions for Whittaker type representations has been studied in \cite{Nadir_Kurinczuk_mod_l_Asai}, and they provide a characterization of $\ell$-modular ${\rm GL}_n(F)$-distinguished (in certain sense) irreducible cuspidal representations of ${\rm GL}_n(E)$ in terms of the poles of their Asai $L$-functions. The second main goal of this article is to show that for Whittaker type $R[{\rm GL}_n(E)]$ modules, a connection exists between the zeros of Asai $\gamma$-factor and ${\rm GL}_n(F)$-distinction; in fact, this connection is established for a more stronger notion of distinction, which we call Whittaker distinguished.

An $R[{\rm GL}_n(E)]$ module of Whittaker type $V$ is said to be Whittaker ${\rm GL}_n(F)$-distinguished if there exists a non-zero ${\rm GL}_n(F)$-equivariant linear form $\phi:\mathbb{W}(V,\psi_E)\rightarrow R$. By precomposing $\phi$ with the canonical map $V\rightarrow \mathbb{W}(V,\psi_E)$, the module $V$ becomes ${\rm GL}_n(F)$-distinguished. When $R$ is a field and $V$ is an irreducible generic representation of ${\rm GL}_n(E)$ that is ${\rm GL}_n(F)$-distinguished, then it is also Whittaker ${\rm GL}_n(F)$-distinguished because the canonical surjection $V\rightarrow \mathbb{W}(V,\psi_E)$, in this case, is an isomorphism. In general, there is no guarantee that a ${\rm GL}_n(F)$-distinguished $R[{\rm GL}_n(E)]$ module of Whittaker type is also Whittaker ${\rm GL}_n(F)$-distinguished. Therefore, the notion of Whittaker distinguished is stronger than the usual notion of distinction for Whittaker type representations in families.
We now state the second main result of this article, which is as follows.
\begin{theorem}\label{main_thm_intro}
Let $E$ be a quadratic extension of a $p$-adic field $F$. Let $V$ be a cuspidal $R[{\rm GL}_n(E)]$ module of Whittaker type such that $V$ is Whittaker ${\rm GL}_n(F)$-distinguished. Then the Asai $\gamma$-factor $\gamma_{As}(X,V,\psi_E)$ vanishes at $X=1$.
\end{theorem}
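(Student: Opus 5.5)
We will argue in the classical way, where distinction forces a pole of the Asai $L$-factor at $s=0$. Over a general ring $R$ there is no $L$-factor, so we work directly with the functional equation of Theorem \ref{func_equ_intro} and the Whittaker distinguishing form $\phi$.

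\textbf{Step 1: cuspidal Zeta integrals are Laurent polynomials.} Since $V$ is cuspidal, every $W\in\mathbb{W}(V,\psi_E)$ restricted to $P_n(E)$ has compact support modulo $N_n(E)$. Hence the integrals defining $I(X,W,\varphi)$ and $I(X,\widetilde W,\widehat\varphi)$ have only finitely many nonzero terms in the Iwasawa/Kirillov expansion, so both lie in $R[X,X^{-1}]$. More precisely, we expect a decomposition
$$I(X,W,\varphi)=I_{(0)}(X,W,\varphi)+\varphi(0)\,\Phi(W)\,(\text{explicit factor}),$$
where $I_{(0)}$ is the part of the integral that vanishes when $\varphi(0)=0$. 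The term surviving at $X=1$ should be given by the $P_n(F)$-invariant integral
$$\Phi(W)=\int_{N_n(F)\backslash P_n(F)}W(p)\,dp.$$
This integral is a finite sum, because of the compact support.

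\textbf{Step 2: connect $\phi$ to $\Phi$.} We would like to use the Ok/Anandavardhanan--Kable--Tandon style result that $\mathrm{Hom}_{P_n(F)}(\mathbb{W},R)$ is spanned by $\Phi$. Over $R$ this uniqueness may fail, so we avoid it. Instead, we evaluate the functional equation at $X=1$, i.e.\ at $q^{-1}X^{-1}=q^{-1}$ on the left, and exploit the ${\rm GL}_n(F)$-invariance of $\phi$ directly.

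The key computation is an identity of the form
$$I(X,W,\varphi)\big|_{X=1}=\int_{N_n(F)\backslash {\rm GL}_n(F)}W(g)\varphi(e_ng)\,|\det g|\,dg.$$
After the substitution $g=pk$ and unfolding, this expresses $I(1,W,\varphi)$ through $\Phi$ applied to a translate of $W$ by a Schwartz-type average over $F^n$. By cuspidality, $\Phi$ extends to a ${\rm GL}_n(F)$-invariant form on $\mathbb W$. Taking $\phi$ in the role of $\Phi$, we choose $W$ and $\varphi$ so that $\phi(W)\ne0$ (with $\varphi=1_{L}$ for a small lattice) and $I(1,W,\varphi)$ is a unit multiple of $\phi(W)$. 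On the other side, we show that $I(q^{-1},\widetilde W,\widehat\varphi)$ equals $\widehat\varphi(0)$ times $\phi$-type terms. The factor $q^{-1}$ shifts the modulus exactly so that the resulting expression is the $\mathrm{GL}_n(F)$-average of the constant-term piece, which vanishes for cuspidal data.

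\textbf{Step 3: conclude.} We rewrite $\gamma_{As}(X,V,\psi_E)=P(X)/Q(X)$ with $Q\in S$. Since $Q(1)$ need not be a unit, we proceed as follows. We multiply the functional equation through and specialise at $X=1$, which yields
$$Q(1)\,I(q^{-1},\widetilde W,\widehat\varphi)=\omega_V(-1)^{n-1}P(1)\,I(1,W,\varphi).$$
Step 2 makes the left side zero. Ranging over $W$ and $\varphi$, the values $I(1,W,\varphi)$ generate a submodule containing $\phi(\mathbb W)\neq0$, and this should force $P(1)=0$, i.e.\ $\gamma_{As}(1,V,\psi_E)=0$. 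To make the annihilator argument precise, we will use that $\phi(\mathbb W)$ is a nonzero ideal and that the values $I(1,W,\varphi)$ contain $\phi(\mathbb W)$.

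\textbf{Main obstacle.} The hardest point is Step 2: showing that at $X=1$ the Zeta integral is computed by an invariant form, and that the dual side vanishes, without the uniqueness of invariant forms available over $\mathbb{C}$. We would first try to prove directly that $W\mapsto I(1,W,\varphi)$ is ${\rm GL}_n(F)$-equivariant up to $\varphi$, and compare it with $\phi$ through the finite Kirillov-support sums.
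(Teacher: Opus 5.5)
There is a genuine gap, and it is in Step 2, which is reversed and rests on a false claim. Cuspidality does not make $\Phi(W)=\int_{N_n(F)\backslash P_n(F)}W(p)\,dp$ invariant under ${\rm GL}_n(F)$. It only makes $\Phi$ a well-defined non-zero $P_n(F)$-invariant form, and non-distinguished cuspidal modules have one too. Distinction enters exactly through the uniqueness you propose to avoid, and that uniqueness does hold here. Since $V$ is cuspidal, $\mathbb{W}(V,\psi_E)|_{P_n(E)}\simeq {\rm ind}_{N_n(E)}^{P_n(E)}\psi_E$, and Flicker's orbit analysis for $P_n(F)$ shows that ${\rm Hom}_{R[P_n(F)]}(\mathbb{W}(V,\psi_E),\mathds{1})$ is free of rank one, spanned by $\Phi$. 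Applied to $V^\vee$, which is Whittaker distinguished along with $V$, this forces $\Phi$ on $\mathbb{W}(V^\vee,\psi_E^{-1})$ to be ${\rm GL}_n(F)$-invariant.

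You have also put the twist on the wrong side. $I(1,W,\varphi)=\int_{N_n(F)\backslash G_n(F)}W(g)\varphi(\eta g)\,dg$ carries no $|\det g|$, whereas $I(q^{-1},\widetilde W,\widehat\varphi)$ does. That twist is what lets you unfold through $P_n(F)\backslash G_n(F)\simeq F^n\setminus\{0\}$ and, using the invariance, obtain $I(q^{-1},\widetilde W,\widehat\varphi)=\big(\int_{F^n}\widehat\varphi\big)\,\Phi(\widetilde W)=\varphi(0)\,\Phi(\widetilde W)$. So the invariant form computes the dual side, and that side does not ``vanish for cuspidal data''. If it did, then any pair with $I(1,W,\varphi)=1$ would give $\gamma_{As}(1,V,\psi_E)=0$ for every cuspidal $V$, distinguished or not, which already fails for non-distinguished cuspidal complex representations. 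Your choice $\varphi=1_L$ with $L$ a lattice has $\varphi(0)=1$, which is exactly the case where the dual side equals $\Phi(\widetilde W)$ and is typically non-zero.

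Step 3 has a separate gap. From $P(1)\,I(1,W,\varphi)=0$ and $\phi(\mathbb{W})\neq 0$ you cannot conclude $P(1)=0$ over a general Noetherian $R$, because a non-zero ideal can have a non-zero annihilator (for instance when $R$ is not reduced). Step 1 is also wrong as stated. When $\varphi(0)\neq 0$, the central direction $g\mapsto \varpi_F^m g$ contributes a geometric series in $\omega_V(\varpi_F)X^n$, so $I(X,W,\varphi)$ is a Laurent polynomial only when $\varphi(0)=0$.

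The way to finish, as the paper does, is to decouple the two sides:
\begin{itemize}
\item Use distinction only to compute the dual side as $I(q^{-1},\widetilde W,\widehat\varphi)=\varphi(0)\,\Phi(\widetilde W)$.
\item Choose $\varphi_0$ with $\varphi_0(0)=0$, for example supported in a small neighbourhood of $\eta$.
\item Choose $W_0$ with a suitably prescribed restriction to $P_n(E)$, which cuspidality allows, so that $I(1,W_0,\varphi_0)=1$.
\end{itemize}
The functional equation at $X=1$ then reads $0=\omega_V(-1)^{n-1}\gamma_{As}(1,V,\psi_E)$. Clearing the denominator as in your Step 3 gives $P(1)=0$ directly, with no annihilator argument.
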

The key idea of the proof of Theorem \ref{main_thm_intro} is similar to that of \cite[Theorem 1.4]{Anand_Kable_Asai}, i.e., using the functional equation of Theorem \ref{func_equ_intro}. Note that the proof of Theorem \ref{main_thm_intro} relies on certain integrals of Whittaker functions (see $\S$ $6.1$), which are well-defined when $V$ is cuspidal. 

The paper is organized as follows. In Section $2$, we discuss some preliminaries that are used in this paper. In Section $3$, we discuss zeta integrals and present some initial results. Section $4$ is devoted to the proof of the rationality of certain Laurent series with zeta integrals as its coefficients. In Section $5$, we prove Theorem \ref{func_equ_intro}. In Section $6$, we discuss the distinguished representations for the Whittaker type families and prove Theorem \ref{main_thm_intro}.

\section{Preliminaries}
In this section, we introduce some notation and recall definitions and standard results used throughout the paper. For this part, we mainly follow the expositions of \cite{BZ_2, Rankin_Selberg_mod_l, Helm_Whittaker, Moss_local_constants}.

Let $F$ be a finite extension of $\mathbb{Q}_p$ with $\mathfrak{o}_F$ the ring of integers of $F$, and $\mathfrak{p}_F$ denote the unique maximal ideal of $\mathfrak{o}_F$. We fix a uniformizer $\varpi_F$ of $F$. Let $k_F$ be the finite residue field $\mathfrak{o}_F/\mathfrak{p}_F$, and let $q$ be the cardinality of $k_F$. We denote by $G_n$ the group ${\rm GL}_n(F)$. Let $\Lambda$ be the ring of Witt vectors of $\overline{\mathbb{F}}_\ell$, where $\ell$ and $p$ are distinct primes. We further assume that $\ell$ is odd, so that the ring $\Lambda$ contains a square root of $q$ (When $\ell = 2$, all arguments presented here will remain valid after possibly adjoining a square root of $q$ to $\Lambda$). Let $R$ be a commutative ring with unity. In this article, all $R[G_n]$ modules are assumed to be smooth, i.e., the stabilizer of each element in the module is an open subgroup of $G_n$. A smooth $R[G_n]$ module $V$ is called admissible if for every compact open subgroup $K$ of $G_n$, the space of $K$-fixed points $V^K$ is a finitely generated $R$-module. Given a prime ideal $\mathfrak{P}$ of $R$, the fiber $V\otimes_R \kappa(\mathfrak{P})$ is denoted by $V_\mathfrak{P}$, where $\kappa(\mathfrak{P})$ is the field of fraction of $R/\mathfrak{P}$. In this article, we are interested in a special class of smooth $R[G_n]$ modules, where $R$ is a Noetherian $\Lambda$-algebra, in which case the fibers admit a unique absolutely irreducible generic quotient. These are the natural generalizations of generic representations of $G_n$ over fields in this context.

If $V$ is a smooth $R[H]$ module for a locally profinite group $H$, and 
$\theta : H \to R^{\times}$ is a smooth character, we define
\(
V_{H,\theta} = V / V(H,\theta),
\)
where $V(H,\theta)$ is the sub-$R$-module generated by the elements of the form 
$h v - \theta(h)v$ for $h \in H$ and $v \in V$. 
For a closed subgroup $K \subseteq H$ and a smooth $R[K]$ module $W$, the induced 
representation $\Ind_{K}^{H}W$ consists of all functions 
$f : H \to W$ satisfying the relation 
\[
f(kh) = k \cdot f(h)
\]
for every $k \in K$ and $h \in H$, and which are invariant under right translation 
by some compact open subgroup of $H$. The compact induction 
$\ind_{K}^{H}W$ is the subspace of such functions that are compactly 
supported modulo $K$. We denote by $I_H^G$ (resp. $\iota_H^G$) the normalized induction (resp. compact induction) functor.

For any standard parabolic subgroup $P$ of $G_n$, we have a decomposition 
$P = MU$, where $M$ is the standard Levi subgroup (block diagonal matrices) 
and $U$ is the unipotent radical (strictly block upper triangular matrices). 
The Jacquet functor associated with $M$, denoted $J_M$, sends a smooth $R[G_n]$ module
$V$ to the quotient $V_{M,1}$ after restricting $V$ to $P$. Moreover, the smooth $R[G_n]$ module 
$V$ is called cuspidal if its Jacquet module $J_M(V)$ is zero 
for every proper Levi subgroup $M$ of $G_n$. The Jacquet functor $J_L$ admits a right adjoint, namely parabolic induction. 
Starting with a smooth $R[M]$ module $W$ for a Levi subgroup $M$, one first inflates 
$V$ to a $R[P]$ module by letting the 
unipotent part $U$ act trivially on $W$ and then takes the compact induction 
$\ind_{P}^{G_n} W$.

\subsection{Whittaker module}\label{def_co-Whittaker}
Let $\psi: F\rightarrow \Lambda^\times$ be a non-trivial additive character. Let $N_n$ be the group of unipotent upper triangular matrices in $G_n$. Then the character $\psi$ induces a non-degenerate character of $N_n$, also denoted by $\psi$, and is defined by
\begin{equation}\label{non_deg_char}
\psi\big((x_{ij})_{i,j=1}^n\big) := \psi(x_{12}+ x_{23} + \cdots+ x_{(n-1)n}).    
\end{equation}
Let $R$ be a Noetherian $\Lambda$-algebra. By abuse of notation, the composition $F\xrightarrow{\psi} \Lambda^\times\rightarrow R^\times$ is also denoted by $\psi$. 
\begin{definition}{\rm (\cite[Definition 2.4]{Moss_Nadir_Kirillov})}
A smooth $R[G_n]$ module $V$ is said to be of {\it Whittaker type} if $V$ is admissible, is finitely generated as $R[G_n]$-module, and the twisted Jacquet module $V_{N_n,\psi}$ is a free $R$-module of rank one.     
\end{definition}
If $V$ is a smooth $R[G_n]$ module of Whittaker type, then we have an isomorphism (\cite[Proposition 3.1]{Moss_Nadir_Kirillov})
$$ {\rm End}_{R[G_n]}(V)\xrightarrow{\sim} R. $$
In particular, the center of $G_n$ acts on $V$ via a character, denoted by $\varpi_V:F^\times\rightarrow R^\times$. In other words, every $R[G_n]$ module of Whittaker type admits a central character. Since $V_{N_n,\psi}\simeq R$, it follows from \cite[Chapter III, Section 1.8]{Vigneras_modl_book} that
$$ {\rm Hom}_{N_n}(V,\psi) \simeq {\rm Hom}_{G_n}(V,{\rm Ind}_{N_n}^{G_n}\psi). $$
Let $\varphi$ be a generator of the Hom-space ${\rm Hom}_{N_n}(V,\psi)$. Then the image of $\varphi$, under the above isomorphism, is the $G_n$-equivariant $R$-linear map $v\mapsto W_v$, where $W_v:G_n\rightarrow R$ is defined as $W_v(g):=\varphi(gv)$ for $g\in G_n$. The collection $\{W_v:v\in V\}$ is called the Whittaker space of $V$, denoted by $\mathbb{W}(V,\psi)$. Thus, we have a $G_n$ equivariant surjection $V\rightarrow \mathbb{W}(V,\psi)$, but it is not necessarily an isomorphism. Therefore, different $R[G_n]$ modules of Whittaker type can have the same Whittaker space (see \cite[Section 2.2]{Moss_local_constants}). 

For an $R[G_n]$ module $V$ of Whittaker type, we denote by $V^\vee$ the smooth $R[G_n]$ module with the same underlying set as $V$, but for which the action of $g$ on $V^\vee$ coincides with the action of $w_n\,^tg^{-1}w_n$ on $V$, where $w_n$ is matrix in $G_n(\mathbb{Z})$ with $1$'s on the anti-diagonal entries and $0$'s elsewhere. Note that $V^\vee$ is admissible, $R[G_n]$-finitely generated, and $V^\vee_{N_n,\psi^{-1}}$ is a free $R$-module of rank $1$. Thus, $V^\vee$ is also an $R[G_n]$ module of Whittaker type. For $W\in \mathbb{W}(V,\psi)$, let $\widetilde{W}$ be the function on $G_n$, defined by $\widetilde{W}(g) = W(w_n\,^tg^{-1})$ for $g\in G_n$. Then the map $W\mapsto \widetilde{W}$ is an $R$-module isomorphism from $\mathbb{W}(V,\psi)$ onto $\mathbb{W}(V^\vee,\psi^{-1})$ (see \cite[Section 2, P. 1005]{Helm_Moss_converse} for details).
\subsection{Derivatives}
In this subsection, we recall the notion of Bernstein--Zelevinsky derivatives as in \cite[Section 3]{BZ_2}, which also carry over to this setting (\cite[Definition 3.1.1]{LLC_families}). Let $P_n(F)$ be the mirabolic subgroup of $G_n(F)$. We embed the group $G_{n-1}(F)$ into $P_n(F)$ via the map $g\mapsto {\rm diag}(g,1)$. Let $U_n(F)$ be the subgroup of $P_n(F)$, defined as 
$$ U_n(F) := \bigg\{\begin{pmatrix}
    I_{n-1} & x\\
    0 & 1
\end{pmatrix} \mid x\in F^{n-1}\bigg\}. $$
Then $P_n(F)$ is the semi-direct product of $G_{n-1}(F)$ with $U_n(F)$. For a locally profinite group $G$ and a Noetherian $\Lambda$-algebra $R$, we denote by $\mathcal{M}(G)$ the category of smooth $A[G]$ modules. We have the following exact functors
$$ \Phi^{+} : \mathcal{M}(P_{n-1}(F)) \longrightarrow \mathcal{M}(P_n(F)),\,\,\,\, \Phi^{-} :\mathcal{M}(P_{n}(F)) \longrightarrow \mathcal{M}(P_{n-1}(F)), $$
$$ \Psi^{+} :\mathcal{M}(G_{n-1}(F)) \longrightarrow \mathcal{M}_A(P_n(F)),\,\,\,\, \Psi^{-} : \mathcal{M}(P_{n}(F)) \longrightarrow \mathcal{M}(G_{n-1}(F)), $$
defined by 
$$\Phi^+(V) = {\rm ind}_{P_{n-1}U_n(F)}^{P_n(F)}(V\otimes \psi),\,\,\,\, \Phi^-(M) = M_{U_n(F),\psi}, $$
$$\Psi^+(N)= \delta_{U_n(F)}^{1/2}N\otimes\psi,\,\,\,\, \Psi^-(M) = M_{U_n(F),\mathds{1}} $$ 
for $V\in \mathcal{M}(P_{n-1}(F))$, $M\in\mathcal{M}(P_n(F))$, and $N\in \mathcal{M}(G_{n-1}(F))$. The $i$-th Bernstein--Zelevinsky derivative functor $(-)^{(i)} : \mathcal{M}(P_n(F))\rightarrow\mathcal{M}(G_{n-i}(F))$ is defined as 
$$ M^{(i)} = \Psi^-(\Phi^-)^{i-1} M $$
for $M\in \mathcal{M}(P_n(F))$. For a smooth $R[G_n(F)]$ module $V$, the $i$-th derivative $V^{(i)}$ is defined as $V^{(i)} = (V|_{P_n(F)})^{(i)}$. There is a filtration of $R[P_n(F)]$ submodules associated with $V$, namely
$$ V=V_1\supseteq V_2\supseteq\cdots\supseteq V_{n}\supseteq \{0\} $$
with $V_i \simeq (\Phi^+)^{i-1}(\Phi^-)^{i-1}(V)$ and $V_i/V_{i+1} \simeq (\Phi^+)^{i-1}\Psi^+(V^{(i)})$. If $V$ is of Whittaker type, then $V_n \simeq (\Phi^+)^{n-1}\Psi^+(\mathds{1})$, which equal to the compact induction ${\rm ind}_{N_n(F)}^{P_n(F)}\psi$.

\section{Asai zeta integrals and formal series}\label{Asai_Zeta}
In this section, we introduce some further notation regarding Asai zeta integrals and set up initial results to prove Theorem \ref{func_equ_intro}. We closely follow the analogous Rankin--Selberg theory in families due to \cite{Moss_IMRN, Moss_local_constants}.

Let $F$ be a finite extension of $\mathbb{Q}_p$, and let $E$ be a quadratic extension of $F$. Fix an element $\delta\in E\setminus F$ such that $\delta^2\in F$, hence $E=F(\delta)$. Let $\sigma$ be the non-trivial Galois automorphism. We use the notations $G_n(F)$ and $G_n(E)$ for the general linear group $G_n$ to mention the ground field explicitly. Let $\psi_E$ be a non-trivial additive character of $E$ trivial on $F$, it is of the form $x\mapsto \psi({\rm Tr}_{E/F}(\delta x))$ for some non-trivial additive character $\psi$ of $F$, where ${\rm Tr}_{E/F}$ is the trace function. We also denote by $\psi$ (resp. $\psi_E$) the non-degenerate character of $N_n(F)$ (resp. $N_n(E)$), defined as (\ref{non_deg_char}). Let $R$ be a Noetherian $\Lambda$-algebra. Let $C_c^\infty(F^n, R)$ denote the space of locally constant and compactly supported $R$-valued functions on $F^n$. For $\varphi\in C_c^\infty(F^n,R)$, we denote by $\widehat{\varphi}$ the Fourier transformation of $\varphi$ with respect to the $\psi$-self-dual $R$-Haar measure $dx$ on $F^n$ satisfying $dx(\mathfrak{o}_F) = q^{-m/2}$, where $m$ is the smallest integer such that $\psi|_{\mathfrak{p}_F^m}$ is trivial.

For every $r\in \mathbb{Z}$, we set $G_n(F)^r = \{g\in G_n(F) : v({\rm det}g) = r\}$. More generally, $X^r := X\cap G_n(F)^r$ for any $X\subseteq G_n(F)$. Let $\eta = (0,\dots,0,1)\in F^n$. Let $V$ be an $R[G_n(E)]$ module of Whittaker type. For $W\in \mathbb{W}(V,\psi_E)$ and $\varphi\in C_c^\infty(F^n,R)$, consider the following integrals
$$ c_r(W, \varphi) := \int_{N_n(F)\backslash G_n(F)^r} W(g)\,\varphi(\eta g)\, dg,\,\,\,\, r\in\mathbb{Z}, $$
where $dg$ is the right $G_n(F)$-invariant Haar measure on the coset space $N_n(F)\backslash G_n(F)$. 
\begin{lemma}
Let $\varphi\in {\rm Ind}_{N_n(E)}^{G_n(E)}\psi_E$. Then, for every $m\in\mathbb{Z}$, the restriction $\varphi|_{G_n(E)^m}$ is compactly supported modulo $N_n(E)$.
\end{lemma}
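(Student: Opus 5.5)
Using the Iwasawa decomposition $G_n(E)=N_n(E)A_n(E)K$, where $A_n(E)$ is the diagonal torus and $K={\rm GL}_n(\mathfrak{o}_E)$, the plan is to show that the support of $\varphi$ on $G_n(E)^m$, taken modulo $N_n(E)$, lies in $N_n(E)\,\Omega\, K$ for a compact set $\Omega\subset A_n(E)$. Since $N_n(E)\backslash N_n(E)\Omega K$ is the image of the compact set $\Omega\times K$, this gives the claim.

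First, by smoothness, $\varphi$ is right invariant under some open compact subgroup $K'\subseteq K$, and $K/K'$ is finite. For $a\in A_n(E)$ and $k\in K$ we have $\varphi(ak)=(\rho(k)\varphi)(a)$, where $\rho$ is right translation. The finitely many functions $\rho(k_i)\varphi$, with $k_i$ running over representatives of $K/K'$, are again in ${\rm Ind}_{N_n(E)}^{G_n(E)}\psi_E$. So it suffices to bound the support of a single such function $f$ on $A_n(E)$, with $f$ right invariant under an open compact subgroup $K''$; we may take $K''$ common to all the $\rho(k_i)\varphi$.

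The key step is the standard "Whittaker support" argument. Write $a={\rm diag}(a_1,\dots,a_n)$ and take $u\in N_n(E)\cap K''$ with only the $(i,i+1)$ entry $x\neq 0$, where $x$ ranges over a fixed lattice $\mathfrak{p}_E^{c}$ determined by $K''$. Then $aua^{-1}\in N_n(E)$, and
$$ f(a)=f(au)=f(aua^{-1}a)=\psi_E\big((a_i/a_{i+1})x\big)f(a). $$
If $f(a)\neq 0$, then $\psi_E$ is trivial on $(a_i/a_{i+1})\mathfrak{p}_E^{c}$. Since $\psi_E$ is non-trivial, its conductor bounds this, so $|a_i/a_{i+1}|_E\le C$ for all $i$, with $C$ independent of $a$. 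Coefficients in $R$ cause no difficulty here: $\psi_E(y)-1$ need not be a unit, but for $y\notin\ker\psi_E$ the value $\psi_E(y)$ is a root of unity of $p$-power order different from $1$. Hence $\psi_E(y)-1$ divides a power of $p$, which is invertible in $\Lambda\subseteq R$, so $(\psi_E(y)-1)f(a)=0$ forces $f(a)=0$. This unit argument is the one point needing care, and I would check it first.

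Finally, impose the determinant condition. On $G_n(E)^m$ the valuation $v(a_1\cdots a_n)=m$ is fixed (up to the fixed contribution from $k$, which is zero since $\det k$ is a unit). The bounds $v(a_i)-v(a_{i+1})\ge -c'$ for all $i$, together with a fixed sum of the $v(a_i)$, confine $(v(a_1),\dots,v(a_n))$ to a set that is bounded above and below in each coordinate. That is, the coordinates are bounded above and below in terms of $m$ and $c'$, so only finitely many valuation vectors occur. Since $A_n(\mathfrak{o}_E)\subseteq K$ is absorbed into $K$, the support in $A_n(E)$ modulo $A_n(\mathfrak{o}_E)$ is a finite set of diagonal matrices $\varpi_E^{\lambda}$. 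Therefore ${\rm supp}(\varphi|_{G_n(E)^m})\subseteq N_n(E)\{\varpi_E^\lambda\}_{\lambda\text{ finite}}K$, which is compact modulo $N_n(E)$.

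I expect no real obstacle beyond the unit argument for $\psi_E(y)-1$. The rest is the classical argument, and it transfers verbatim.
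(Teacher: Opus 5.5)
Your route is the paper's route: Iwasawa reduction to the diagonal torus, then perturbing by a unipotent $u_i(x)$ with $x\in\mathfrak{p}_E^{c}$ to get $f(a)=0$ unless $\psi_E$ is trivial on the fractional ideal $(a_i/a_{i+1})\mathfrak{p}_E^{c}$. That part is fine. Your reason that $\psi_E(y)-1$ is a unit (it divides $p$, which is invertible in $\Lambda$) is also correct.

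\textbf{The gap is the final step.} The inequalities $v(a_i)-v(a_{i+1})\ge -c'$ together with $\sum_i v(a_i)=m$ do \emph{not} bound the valuation vector. The vector $(m+j,0,\dots,0,-j)$ (or $(m+j,-j)$ when $n=2$) satisfies all of them for every large $j$. The Whittaker argument only gives a one-sided cone: it rules out $|a_i/a_{i+1}|$ large but says nothing as $a_i/a_{i+1}\to 0$. Fixing the determinant cannot supply the missing bound. The paper's proof makes exactly the same move (``because of the restriction on the determinant \ldots\ it is sufficient to show \ldots''), so it has the same gap.

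\textbf{No repair is possible, because the statement is false as written.} Take $n=2$, a uniformizer $\varpi_E$ of $E$, and $K={\rm GL}_2(\mathfrak{o}_E)$. Pick $c$ with $\psi_E|_{\mathfrak{p}_E^{c}}=1$, and put $\varphi(u\,{\rm diag}(a_1,a_2)\,k)=\psi_E(u)\,\mathbf{1}_{\mathfrak{p}_E^{c}}(a_1/a_2)$ for $u\in N_2(E)$, $k\in K$.
\begin{itemize}
\item It is well defined. Two Iwasawa decompositions of one element differ by $B_2(\mathfrak{o}_E)$. This multiplies $a_1/a_2$ by a unit and changes $u$ by an element of $N_2(E)$ with $(1,2)$-entry in $(a_1/a_2)\mathfrak{o}_E$, which lies in $\ker\psi_E$ whenever the indicator is non-zero.
\item So $\varphi$ is a right $K$-invariant element of ${\rm Ind}_{N_2(E)}^{G_2(E)}\psi_E$.
\item It equals $1$ at ${\rm diag}(\varpi_E^{m+j},\varpi_E^{-j})$ for all large $j$. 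These all have determinant $\varpi_E^{m}$ and lie in pairwise distinct open double cosets $N_2(E)\,t\,K$.
\item A compact subset of $N_2(E)\backslash G_2(E)$ meets only finitely many such double cosets, so the support on the determinant-$m$ piece is not compact modulo $N_2(E)$.
\end{itemize}
Spherical Whittaker functions of generic unramified principal series have the same shape, so restricting to Whittaker spaces does not help.

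\textbf{What is true, and what is actually needed for $c_r(W,\Phi)$ to be well defined,} uses the Schwartz function $\Phi\in C_c^\infty(F^n,R)$. For $g=uak$ with $u\in N_n(F)$, $a\in T_n(F)$, $k\in{\rm GL}_n(\mathfrak{o}_F)$, one has $\eta g=a_n\,\eta k$ with $\eta k$ primitive. So $\Phi(\eta g)\neq 0$ forces $v(a_n)\ge -c''$. Your cone inequalities then bound every $v(a_i)$ from below, and the fixed determinant bounds them from above. Here the perturbing entry must be taken in $E$, as you did, since $\psi_E$ is trivial on $F$. Similarly, your argument proves verbatim that $h\mapsto W({\rm diag}(h,1))$ is compactly supported modulo $N_{n-1}(E)$ on $G_{n-1}(E)^m$, where the last coordinate is pinned to $1$.
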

\begin{proof}
In view of Iwasawa decomposition, it is sufficient to prove that $\varphi|_{T_n(E)^m}$ is compactly supported. However, because of the restriction on the determinant in $T_n(E)^m$, it is sufficient to show that there is $r\in \mathbb{Z}$ such that $W({\rm diag}(a_1,\dots, a_n)) = 0$ whenever $v(a_i)- v(a_{i+1})\leq r$ for $i=1,2,\dots, n-1$. For $x\in E$, consider the matrix 
$$ u_i(x) = 
\begin{pmatrix}
1 & & & & & &\\
& . & & & & &\\
& & 1 & & & &\\
& & & 1 & x &\\
& & & 0 & 1 &\\
& & & & & 1 &\\
& & & & & & .\\
& & & & & & & 1
\end{pmatrix} \in G_n(E). $$
For $x$ sufficiently small, $\varphi$ is fixed by $u_i(x)$ on the right. Then 
$$ (u_i(x).\varphi -\varphi)({\rm diag}(a_1,\dots,a_n)) = \Big(\psi_E\big(\frac{a_ix}{a_{i+1}}\big)-1\Big)\,\varphi({\rm diag}(a_1,\dots,a_n)). $$
Now, the element $\psi_E\big(\frac{a_ix}{a_{i+1}}\big)-1$ is invertible in $\Lambda$ for $v(a_i)-v(a_{i+1})$ small enough, and hence invertible in $R$. Thus, we get $\varphi({\rm diag}(a_1,\dots,a_n)) = 0$ and the result follows.
\end{proof}
Therefore, the integrals $c_r(W,\varphi)$, $r\in\mathbb{Z}$, are well-defined. Now, we define the formal power series
$$ I(X,W,\varphi) = \sum_{r\in \mathbb{Z}} c_r(W,\varphi) X^r \in R[[X,X^{-1}]]. $$
\begin{lemma}\label{285}
The series $I(X,W,\varphi)$ has finitely many non-zero positive powers of $X^{-1}$, and thus is an element of $R[[X]][X^{-1}]$.
\end{lemma}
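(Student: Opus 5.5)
We show that $c_r(W,\varphi)=0$ for all $r$ sufficiently negative. Throughout we use the Iwasawa decomposition $G_n(F)=N_n(F)A_n(F)K_F$, with $A_n(F)$ the diagonal torus and $K_F={\rm GL}_n(\mathfrak{o}_F)$. Since $W$ is smooth, it is right invariant under some open compact subgroup $K'\subseteq K_F$. Since $\varphi$ is locally constant and compactly supported, it is invariant under translation by a small lattice. Shrinking $K'$, we may assume $\varphi(\eta gk)=\varphi(\eta g)$ for $k\in K'$. Then $c_r(W,\varphi)$ is a finite sum, over representatives $k$ of $K_F/K'$, of integrals over $a\in A_n(F)^r$ of
$$ W(ak')\,\varphi(\eta a k')\,\delta_{B}^{-1}(a), $$
where $k'$ runs over the finitely many representatives. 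So it suffices to fix $k$, set $W'=k\cdot W\in\mathbb{W}(V,\psi_E)$ and $\varphi'=k\cdot\varphi$, and show that $W'(a)\varphi'(\eta a)=0$ for $a=\mathrm{diag}(a_1,\dots,a_n)\in A_n(F)$ with $v(\det a)$ sufficiently negative.

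\textbf{Bound from $\varphi'$.} We have $\eta a=(0,\dots,0,a_n)$. Because $\varphi'$ is compactly supported, there is $c_1$, depending only on $\varphi$ and $K'$, such that $\varphi'(\eta a)\neq 0$ forces $v(a_n)\ge c_1$.

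\textbf{Bound from $W'$.} The argument of the preceding lemma, applied to $W'$ restricted to $A_n(F)\subset T_n(E)$ and using $u_i(x)$ with $x\in E$ small, gives $W'(a)=0$ unless $v_E(a_i/a_{i+1})\ge r_0$ for every $i$. Here $r_0$ depends only on the conductor of $\psi_E$ and the level of $W'$; in that lemma the choice of $r$ did not depend on $m$. Since $a_i\in F^\times$, this becomes $v(a_i)-v(a_{i+1})\ge c_2$ for a constant $c_2$. One point needs care: $\psi_E$ restricted to $F$ is trivial, so we must choose $x\in E$ (for instance $x\in\delta F$) for which $\psi_E(a_ix/a_{i+1})\ne1$. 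This is possible because $\psi_E(\delta y)=\psi(2\delta^2y)$ is a nontrivial character of $y\in F$. Invertibility of $\psi_E(\cdot)-1$ in $\Lambda$ holds since it is $\zeta-1$ for a nontrivial $p$-power root of unity $\zeta$, and $p$ is a unit in $\Lambda$.

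\textbf{Conclusion.} Chaining the two bounds gives $v(a_i)\ge v(a_n)+(n-i)c_2\ge c_1+(n-i)c_2$ for every $i$. Hence
$$ v(\det a)=\sum_i v(a_i)\ge nc_1+\tfrac{n(n-1)}{2}c_2=:r_1 . $$
The constant $r_1$ depends only on $W$ and $\varphi$, after taking the minimum over the finitely many $k$. Therefore $c_r(W,\varphi)=0$ for $r<r_1$, and so $I(X,W,\varphi)\in R[[X]][X^{-1}]$.

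The main obstacle is the second bound, which controls the support of $W$ on the $F$-torus uniformly in the determinant. The point is to choose $x\in E\setminus F$ so that the relevant character is nontrivial even though $\psi_E$ is trivial on $F$. The remaining steps are routine bookkeeping with the Iwasawa decomposition and the finiteness of $K_F/K'$.
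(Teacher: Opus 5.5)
Your proof is correct. It follows the same outline as the paper's: an Iwasawa reduction to the diagonal torus, then vanishing of $W(a)\varphi(\eta a)$ using right invariance of $W$ under small unipotents, together with the fact that $\zeta-1$ is a unit in $\Lambda$ for a nontrivial $p$-power root of unity $\zeta$. However, your execution differs from the paper's at exactly the points where the paper's written argument does not work.

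First, the paper takes $x\in\mathfrak{p}_F^j$ in the $(1,2)$-entry. For $a\in T_n(F)$, conjugation then produces the factor $\psi_E(a_{11}x/a_{22})$, which is identically $1$ because $\psi_E$ is trivial on $F$, so no vanishing follows. Your choice $x\in\delta F$ (or any small $x\in E$, as in the first lemma of Section 3) is what is actually needed.

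Second, the paper uses only the first simple root. It asserts that making $v(\det a)$ very negative pushes $a_1x$ out of $\ker\psi$, and it never uses the support of $\varphi$. This fails already for scalar $a=tI_n$: all root ratios equal $1$, and $W(tI_n)=\omega_V(t)W(1)$ need not vanish. The vanishing for $v(t)\ll 0$ comes only from $\varphi(\eta a)=\varphi(0,\dots,0,t)$, i.e. from the compact support of $\varphi$.

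Your version supplies what is missing. You get a lower bound on $v(a_n)$ from the support of $\varphi$, lower bounds on $v(a_i)-v(a_{i+1})$ for every $i$ from $W$, and then chain them. This is the standard estimate from Rankin--Selberg theory and proves the lemma cleanly. Your explicit treatment of the finitely many $K_F/K'$-translates of $W$ and $\varphi$ also fills in the reduction to the torus that the paper leaves implicit.
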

\begin{proof}
By Iwasawa decomposition $G_n(F) = B_n(F)K_0(F)$, where $B_n(F)$ is the group of upper triangular matrices in $G_n(F)$ and $K_0(F)$ is the maximal compact open subgroup of $G_n(F)$, it reduces to consider the integral 
$$ \int_{N_n(F)\backslash T_n(F)^r} W(a)\,\varphi(\eta a)\, da, $$
where $T_n(F)$ is the diagonal torus in $G_n(F)$. We now show that there exists an integer $N>0$ such that  $W(a)\,\varphi(\eta a)$ vanishes for all $a\in T_n(F)$ with $v({\rm det}(a)) < N$. Choose an integer $j$ such that $W$ is stabilized by the compact open subgroup 
$$ \begin{pmatrix}
    1 & \mathfrak{p}_F^j &  & \\
      & 1 &  & \\
      &  & . &   \\
      &  &  & . \\
      &  &  &  & 1    
\end{pmatrix} $$
Let $x\in \mathfrak{p}_F^j$. Then we have 
$$ (1-\psi(a_1x))\,W(a)\,\varphi(\eta a) = 0, $$
where $a_1$ is the first entry in the diagonal matrix $a$. Whenever $v({\rm det}(a))$ is negetive enough that $a_1x$ lies outside ${\rm ker}(\psi)$, we get that $\psi(a_1x)$ is non-trivial $p$-power root of unity in $\Lambda$. Then $1-\psi(a_1x)$ is a lift of some non-zero element in $\overline{\mathbb{F}}_\ell$, and hence is a unit in $\Lambda$. Thus, for such $a\in T_n(F)$, we get 
$$ W(a)\,\varphi(\eta a) = 0. $$
Hence the lemma.
\end{proof}

\section{Rationality of Asai Formal Series}\label{rationality}
This section is devoted to the proof of a rationality result of the formal series $I(X,W,\varphi)$, introduced in Section \ref{Asai_Zeta}. We first prove some essential lemmas. Let $V$ be a smooth $R[G_n(E)]$ module of Whittaker type, where $R$ is a Noetherian $\Lambda$-algebra. For simplicity of notation, let
$\mathcal{W}$ be the Whittaker space $\mathbb{W}(V,\psi_E)$. The restriction to $T_n(F)$ map
\[\mathcal{W} \longrightarrow C^\infty(T_n(F),R),
\]
gives a surjective map of $R$-modules. We denote by $\mathcal{V}$ the image of this restriction map. Let $v : F \longrightarrow \mathbb{Z}$
denote the valuation associated with \( F \).  
For a function \( \varphi \) defined on \( T_n(F) \), we say that
$\varphi(a) \to 0 \quad \text{uniformly as } v(a_i) \to \infty$
if there exists a constant \( N > 0 \) such that whenever \( v(a_i) \geq N \), the function vanishes at $a$, i.e., $\varphi(a) = 0$.
Following \cite[Section $3$, P. 4912]{Moss_IMRN}, we define the set
\[
\mathcal{V}_i := \left\{ \varphi \in \mathcal{V} \ \middle|\ \varphi(a) \to 0 \ \text{uniformly as } v(a_i) \to \infty \right\}.
\]
For each \(1\leq i \leq n \), let \( M_n(i) \) be the standard Levi subgroup $G_i(E) \times G_{n-i}(E)$
and denote by \( N_n(i) \) its unipotent radical. The lemma below is proved using the similar idea as in \cite[Lemma $3.3$]{Moss_IMRN}.
\begin{lemma}\label{100}
For each $i$, let $\theta_i$ be the composition $\mathcal{W} \rightarrow \mathcal{V} \rightarrow \mathcal{V}/\mathcal{V}_i$. Then the submodule $\mathcal{W}(N_n(i), \mathds{1})$ of $\mathcal{W}$  is contained in $\ker (\theta_i)$.
\end{lemma}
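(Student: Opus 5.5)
The strategy is to reduce to a single generator of $\mathcal{W}(N_n(i),\mathds{1})$ and evaluate it on the diagonal torus $T_n(F)$. Since $\ker(\theta_i)$ is an $R$-submodule, it suffices to treat elements of the form $W' = u\cdot W - W$ with $u\in N_n(i)$ and $W\in\mathcal{W}$. Here $N_n(i)$ is the unipotent radical of the parabolic of $G_n(E)$ with Levi $G_i(E)\times G_{n-i}(E)$. For such an element we must produce a constant $N>0$ such that
\[
W'(a) = W(au)-W(a) = 0
\]
for every $a={\rm diag}(a_1,\dots,a_n)\in T_n(F)$ with $v(a_i)\geq N$.

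\textbf{Step 1 (moving $u$ past $a$).} Write $W(au)=W\big((aua^{-1})a\big)$. Because $u\in N_n(i)\subseteq N_n(E)$, so is $aua^{-1}$, and hence
\[
W(au)=\psi_E(aua^{-1})\,W(a).
\]
Therefore
\[
W'(a)=\big(\psi_E(aua^{-1})-1\big)W(a).
\]
Under $\psi_E$, only the superdiagonal entries of $aua^{-1}$ matter. Since $u$ lies in $N_n(i)$, its only superdiagonal entry that can be nonzero is $x=u_{i,i+1}$. Consequently
\[
\psi_E(aua^{-1})=\psi_E\!\left(\frac{a_i}{a_{i+1}}\,x\right).
\]

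\textbf{Step 2 (smallness and vanishing).} The term $\psi_E(a_i x/a_{i+1})-1$ vanishes once $a_i x/a_{i+1}$ lies in the conductor of $\psi_E$. Making $v(a_i)$ large achieves this, but only if $v(a_{i+1})$ is controlled. This is the main obstacle, since the bound must be uniform in all the other coordinates.

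To handle it, use that $W$ is right-invariant under a small compact open subgroup. By the argument of Lemma \ref{285}, applied with $u_j(y)$ for small $y$ and $j\neq i$, one gets $W(a)=0$ unless $v(a_j)-v(a_{j+1})\ge -c$ for a constant $c$ depending only on $W$. This is also the content of the lemma on compact support modulo $N_n(E)$.

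This inequality bounds $v(a_{i+1})$ from above in terms of $v(a_i)$, and in the wrong direction. So instead choose the integer $N$ using the $j=i$ inequality together with the dependence on $u$. If the estimate only yields $W(a)=0$ for $v(a_i)-v(a_{i+1})$ small, I would use the following fallback. Conjugate $u$ into a compact open subgroup fixing $W$ by means of an element of the centre and of ${\rm diag}(I_i,\varpi^k I_{n-i})$. This follows \cite[Lemma 3.3]{Moss_IMRN}.

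Concretely, write $u=a^{-1}(a u a^{-1})a$. The entries of $aua^{-1}$ in the block $(\le i,>i)$ are $a_r u_{rs}/a_s$. Their valuations can be made large once each $v(a_r)$ with $r\le i$ is large, because the support condition forces $v(a_r)\ge v(a_i)-(i-r)c'$, while the relevant $a_s$ with $s>i$ are handled by the Iwasawa argument from Lemma \ref{285}.

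Once every block entry of $aua^{-1}$ has valuation beyond the level at which $W$ is invariant, we get $W(au)=W\big((aua^{-1})a\big)$, and this equals $W(a)$ by the invariance. Hence $W'(a)=0$ for $v(a_i)\ge N$, so $W'|_{T_n(F)}\in\mathcal{V}_i$, which proves the lemma.
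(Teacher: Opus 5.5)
Your Step 1 is the same computation the paper uses. Step 2 has a genuine gap, and it comes from using the wrong coordinates on $T_n(F)$.

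\textbf{The intended coordinates.} In the paper (following Moss), the condition defining $\mathcal{V}_i$ refers to the parametrization $(a_1,\dots,a_n)\mapsto \mathrm{diag}(a_1\cdots a_n,\,a_2\cdots a_n,\dots,a_n)$. The paper's own proof of this lemma uses it, and it is spelled out in the next subsection. So $a_i$ is the ratio of the $i$-th to the $(i+1)$-th diagonal entry, not the $i$-th diagonal entry itself. With these coordinates your Step 1 gives directly $(aua^{-1})_{i,i+1}=a_i u_{i,i+1}$, hence
$(u\cdot W-W)(a)=\bigl(\psi_E(a_iu_{i,i+1})-1\bigr)W(a)$.
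This vanishes as soon as $v(a_i)$ is large enough that $a_iu_{i,i+1}$ lies in the open kernel of $\psi_E$. The bound depends only on $u$, and no support property of $W$ is needed. For a finite $R$-linear combination of generators you take the maximum of the bounds. That is the whole proof in the paper; the ``obstacle'' you identified does not arise.

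\textbf{In your coordinates the statement is false.} With $a=\mathrm{diag}(a_1,\dots,a_n)$, no refinement of Step 2 can succeed. Take $n=2$, $i=1$, $u=\begin{pmatrix}1&x\\0&1\end{pmatrix}$ with $\psi_E(x)\neq 1$, choose $W$ with $W(1)\neq 0$, and set $a=\varpi_F^k I_2$. Then
$(u\cdot W-W)(a)=(\psi_E(x)-1)\,\omega_V(\varpi_F)^k\,W(1)$.
Here $\psi_E(x)-1$ is a unit in $\Lambda$ and $\omega_V(\varpi_F)\in R^\times$, so this is nonzero for every $k$, even though $v(a_1)=k\to\infty$.

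This shows exactly where your argument breaks. The support estimate from the first lemma of Section~\ref{Asai_Zeta} and Lemma~\ref{285} only gives $W(a)=0$ when some $v(a_j)-v(a_{j+1})$ is very negative. That bounds $|a_r/a_s|$ from above for $r<s$, but never forces it to be small. So your claim that the entries $a_ru_{rs}/a_s$ with $s>i$ acquire large valuation, with the $a_s$ ``handled by the Iwasawa argument,'' is unfounded. The conjugation fallback fails for the same reason.
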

\begin{proof}
By definition, the submodule $\mathcal{W}(N_n(i), \mathds{1})$ is generated by elements of the form 
$u.\varphi - \varphi$, where $u\in N_n(i)$ and $\varphi \in \mathcal{W}$.  
For any $g \in G_n(E)$ and $x\in N_n(E)$, we have $\varphi(xg) = \psi_E(x)\,\varphi(g)$. 
Now taking 
\[
a =
\begin{pmatrix}
a_1 & \cdots & a_n & \\
    &        &     & a_2 & \cdots & a_n \\
 & & &  & &  &\ddots &  \\
 & & & & & & & a_n
\end{pmatrix}
\in T_{n}(F) \quad \text{and}
\quad
u =
\begin{pmatrix}
I_i & Y \\
0 & I_{n-i}
\end{pmatrix}
\in N_n(i)
\]
we obtain
\begin{align*}
(u.\varphi)(a) 
&= \varphi\bigl( a u a^{-1} a\bigr) \\
&= \psi_E\bigl( a u a^{-1} \bigr)\, \varphi(a) \\
&= \psi_E\!
\begin{pmatrix}
I_i & z \\
0 & I_{n-i}
\end{pmatrix}\,
\varphi(a),
\end{align*}
where $z$ is an $i \times (n-i)$ matrix whose bottom left entry is $a_i Y_{i,1}$. Then this expression equals $\psi_E (a_i Y_{i,1})\,\varphi(a)$. Since $\ker(\psi_E)$ is open, for $\upsilon(a_i)$ sufficiently large, we have $\psi_E(a_i Y_{i,1}) = 1$, and this shows that  
$(u.\varphi)(a)- \varphi(a) = 0$ for $v(a_i)$ sufficiently large. Hence, the lemma.
\end{proof}
As a consequence, we have
\begin{corollary}
The map
$\theta_i : \mathcal{W} \longrightarrow \mathcal{V}/\mathcal{V}_i$,
defined as in the construction preceding Lemma \ref{100}, factors through the Jacquet module $\ J_{M_n(i)}\mathcal{W}$.
\end{corollary}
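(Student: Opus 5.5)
The corollary says that $\theta_i$ factors through the Jacquet module $J_{M_n(i)}\mathcal{W}$. The plan is to combine Lemma \ref{100} with the definition of the Jacquet functor as a coinvariant quotient.

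By definition, $J_{M_n(i)}\mathcal{W}$ is obtained by restricting $\mathcal{W}$ to the parabolic $P = M_n(i)N_n(i)$ and forming
$$\mathcal{W}_{N_n(i),\mathds{1}} = \mathcal{W}/\mathcal{W}(N_n(i),\mathds{1}).$$
The canonical projection $p:\mathcal{W}\to J_{M_n(i)}\mathcal{W}$ is therefore a surjective $R$-linear map whose kernel is exactly $\mathcal{W}(N_n(i),\mathds{1})$. The map $\theta_i$, being the composite of the restriction map $\mathcal{W}\to\mathcal{V}$ with the quotient $\mathcal{V}\to\mathcal{V}/\mathcal{V}_i$, is $R$-linear.

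The key step is to verify that $\mathcal{V}_i$ is an $R$-submodule of $\mathcal{V}$, so that the quotient $\mathcal{V}/\mathcal{V}_i$ is meaningful. If $\varphi_1,\varphi_2\in\mathcal{V}_i$ vanish for $v(a_i)\ge N_1$ and $v(a_i)\ge N_2$ respectively, then $r\varphi_1+\varphi_2$ vanishes for $v(a_i)\ge \max(N_1,N_2)$, for any $r\in R$.

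By Lemma \ref{100}, $\mathcal{W}(N_n(i),\mathds{1})\subseteq\ker(\theta_i)$. The universal property of the quotient module then gives a unique $R$-linear map
$$\bar\theta_i: J_{M_n(i)}\mathcal{W}\to\mathcal{V}/\mathcal{V}_i \quad\text{with}\quad \theta_i=\bar\theta_i\circ p,$$
which is the asserted factorization.

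There is no real obstacle beyond Lemma \ref{100} itself. The one point of care is the claim, used there, that $\mathcal{W}(N_n(i),\mathds{1})$ is generated as an $R$-module by the elements $u.\varphi-\varphi$. Any $R$-linear combination of elements in $\ker\theta_i$ stays in $\ker\theta_i$, so it suffices to check generators. The lemma's argument shows $(u.\varphi-\varphi)(a)=0$ for $v(a_i)$ large, with the threshold depending on the fixed pair $(u,\varphi)$, which is exactly what membership in $\mathcal{V}_i$ requires. Optionally, one may note that $\bar\theta_i$ is compatible with the natural action of those elements of $T_n(F)$ preserving the condition $v(a_i)\to\infty$, though this is not needed for the statement as given.
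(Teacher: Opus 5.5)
Your proposal is correct and follows the paper's proof: Lemma \ref{100} puts the generators $u.\varphi-\varphi$ of $\mathcal{W}(N_n(i),\mathds{1})$ in $\ker\theta_i$, and the universal property of the quotient $J_{M_n(i)}\mathcal{W}=\mathcal{W}/\mathcal{W}(N_n(i),\mathds{1})$ then gives the factorization. Your additional checks, namely that $\mathcal{V}_i$ is an $R$-submodule and that it suffices to verify the generators, are bookkeeping the paper leaves implicit.
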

\begin{proof}
By Lemma \ref{100}, the map \( \theta_i \) is trivial on the $R$-submodule
$\mathcal{W}(N_n(i), \mathds{1})$ of $\mathcal{W}$.
Hence, it factors through the quotient
$J_{M_n(i)}\mathcal{W}$.
\end{proof}
\subsection{}
Note that the torus \( T_n(F) \) can be parametrized by
$(a_1,\dots,a_n) \in (F^\times)^n$
via the map
\[
(a_1,\dots,a_n) \ \longmapsto\ 
a =
\begin{pmatrix}
a_1\cdots a_n & & & \\
& a_2\cdots a_n & & \\
& & \ddots & \\
& & & a_n
\end{pmatrix}
\in T_n(F),
\]
We now define the operator \( \rho_i(\varpi_F) \) as right translation by the diagonal matrix
\[
D_i(\varpi_F) := \mathrm{diag}(\underbrace{\varpi_F,\dots,\varpi_F}_{i\ \text{times}},
\underbrace{1,\dots,1}_{n-i\ \text{times}}).
\]
In other words, \( \rho_i(\varpi_F) \) acts on functions on \( T_n(F) \) by precomposing with the map
\[
a \ \longmapsto\ a D_i(\varpi_F).
\]
Since both \( a \) and \( D_i(\varpi_F) \) are diagonal, their product is diagonal with entries multiplied coordinatewise. Writing the diagonal of \( a \) as
\[
(a_1\cdots a_n,\ a_2\cdots a_n,\ \dots,\ a_i\cdots a_n,\ a_{i+1}\cdots a_n,\ \dots,\ a_n),
\]
the diagonal of \( a D_i(\varpi_F) \) is
\[
(\varpi_F(a_1\cdots a_n),\ \varpi_F(a_2\cdots a_n),\ \dots,\ \varpi_F(a_i\cdots a_n),\ 
a_{i+1}\cdots a_n,\ \dots,\ a_n).
\]
In terms of the parameter \( (a_1,\dots,a_n) \), this corresponds exactly to multiplying
the \( i \)-th parameter by \( \varpi_F \) while leaving all other \( a_j \) unchanged:
\[
(a_1,\dots,a_{i-1},a_i,a_{i+1},\dots,a_n)
\ \longmapsto\
(a_1,\dots,a_{i-1},a_i\varpi_F,a_{i+1},\dots,a_n).
\]
Therefore, the right translation by \( D_i(\varpi_F) \) acts on a function \( \varphi \) on \( T_n(F) \) as
\[
(\rho_i(\varpi_F)\varphi)(a)
= \varphi(a D_i(\varpi_F))
= \varphi(a_1,\dots,a_{i-1},a_i\varpi_F,a_{i+1},\dots,a_n).
\]
Note that the submodule $\mathcal{V}_i$ is contained in ${\rm ker}(\rho_i(\varpi_F))$. Therefore, we consider the operator 
$\rho_i(\varpi_F)$
on the quotient space \( \mathcal{V} / \mathcal{V}_i \) defined by right translation by the diagonal matrix $D_i(\varpi_F)$. We now present the following lemma, whose proof follows the technique used in \cite[Lemma $3.5$]{Moss_IMRN}.
\begin{lemma}\label{finite_gen}
Let \( B_i \) be the \( R \)-subalgebra of 
$\mathrm{End}_R(\mathcal{V} / \mathcal{V}_i)$
generated by \( \rho_i(\varpi_F) \).  
Then \( B_i \) is finitely generated as an \( R \)-module.
\end{lemma}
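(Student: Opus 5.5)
The plan is to follow \cite[Lemma 3.5]{Moss_IMRN}. The action of $\rho_i(\varpi_F)$ on $\mathcal{V}/\mathcal{V}_i$ will be realised through the action of a central element of the Levi $M_n(i)$ on the Jacquet module $J_{M_n(i)}\mathcal{W}$. The needed finiteness then comes from the fact that such an element satisfies a monic polynomial over $R$ on the Jacquet module.

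\textbf{Step 1.} The operator $\rho_i(\varpi_F)$ on $\mathcal{V}/\mathcal{V}_i$ is induced by right translation by $D_i(\varpi_F)\in G_n(F)\subset G_n(E)$ acting on $\mathcal{W}$. The restriction map $\mathcal{W}\to\mathcal{V}$ intertwines right translation by $D_i(\varpi_F)$ on $\mathcal{W}$ with $\rho_i(\varpi_F)$ on $\mathcal{V}$, because $D_i(\varpi_F)$ is diagonal and $a\mapsto aD_i(\varpi_F)$ preserves $T_n(F)$. Hence $\theta_i(D_i(\varpi_F)\cdot W)=\rho_i(\varpi_F)\theta_i(W)$, and $\theta_i$ is surjective since $\mathcal{W}\to\mathcal{V}$ is. By the Corollary after Lemma \ref{100}, $\theta_i$ factors through a surjection $\bar\theta_i:J_{M_n(i)}\mathcal{W}\to\mathcal{V}/\mathcal{V}_i$. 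Since $D_i(\varpi_F)$ lies in $M_n(i)$, and indeed in its centre, this factorization intertwines the action of $D_i(\varpi_F)$ on the Jacquet module with $\rho_i(\varpi_F)$.

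\textbf{Step 2.} It therefore suffices to show that $D_i(\varpi_F)$ satisfies a monic polynomial $P(T)\in R[T]$ on $J_{M_n(i)}\mathcal{W}$. Then $P(\rho_i(\varpi_F))=0$ on $\mathcal{V}/\mathcal{V}_i$, so $B_i$ is a quotient of $R[T]/(P)$ and is finitely generated as an $R$-module.

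\textbf{Step 3.} To get $P$, I use that $\mathcal{W}$ is a quotient of $V$, which is admissible and finitely generated over $R[G_n(E)]$. I would argue that $\mathcal{W}$ is again finitely generated and admissible: finite generation is clear, and admissibility holds because $R$ is Noetherian and $\mathcal{W}^K$ is a quotient of $V^K$ for pro-$p$ open $K$, as $p$ is invertible so invariants are exact. Then, by the finiteness results for Jacquet modules over Noetherian $\Lambda$-algebras (second adjointness and Bernstein's finiteness in families, as in \cite{Helm_Bernstein_Center} and used in \cite{Moss_IMRN}), $J_{M_n(i)}\mathcal{W}$ is finitely generated and admissible over $R[M_n(i)]$.

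The centre $Z(M_n(i))$ acts on $J_{M_n(i)}\mathcal{W}$. Pick a compact open $K_M$ such that the finitely many generators are fixed, and take $L=(J_{M_n(i)}\mathcal{W})^{K_M}$. This is a finitely generated $R$-module stable under the central element $z=D_i(\varpi_F)$. Cayley--Hamilton (the determinant trick) applied to a finite generating set of $L$ gives a monic $P$ with $P(z)L=0$. Since $L$ generates $J_{M_n(i)}\mathcal{W}$ over $R[M_n(i)]$ and $z$ is central, $P(z)$ kills the whole Jacquet module.

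The main obstacle is Step 3, specifically the finiteness and admissibility of the Jacquet module over a general Noetherian $R$. There I would cite the known finiteness results rather than reprove them, as \cite[Lemma 3.5]{Moss_IMRN} does.
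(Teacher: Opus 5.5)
Your proposal is correct and follows essentially the same route as the paper. You factor $\theta_i$ through $J_{M_n(i)}\mathcal{W}$, on which $D_i(\varpi_F)$ acts centrally, then invoke finite generation and admissibility of this Jacquet module, and use that the invariants under a compact open subgroup fixing finitely many generators form a finitely generated $R$-module. The only difference is in the last step: the paper shows that all of $\End_{R[M_n(i)]}(J_{M_n(i)}\mathcal{W})$ is a finitely generated $R$-module (via evaluation at the generators and Noetherianity), whereas your determinant trick on $L$ directly produces a monic annihilating polynomial, which is exactly what Lemma \ref{concluding_lemma_rational} then uses.
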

\begin{proof}
For each index \( i \), let \(\rho_i(\varpi_F)\) denote the right translation operator induced by the diagonal element $D_i(\varpi_F)$
acting on Whittaker functions. Since \( D_i(\varpi_F) \) normalizes the unipotent subgroup \( N_n(i) \), the operator \(\rho_i(\varpi_F)\) preserves the subspace defining the \((N_n(i),\mathds{1})\)-coinvariants. Consequently, \(\rho_i(\varpi_F)\) induces an \( R[M_n(i)] \)-linear endomorphism of the Jacquet module
\[
\rho_i(\varpi_F): J_{M_n(i)}\mathcal{W} \longrightarrow J_{M_n(i)}\mathcal{W},
\]
Consider the natural surjection
\[
\pi: J_{M_n(i)}\mathcal{W} \twoheadrightarrow\ \mathcal{V}/\mathcal{V}_i
\]
arising from the factorization of \(\theta_i\). Because \(\rho_i(\varpi_F)\) corresponds to right translation by \( g_i \) and \( g_i \) normalizes the unipotent radical, we have
\[
\rho_i(\varpi_F)\bigl(\ker \pi\bigr)\subseteq \ker \pi.
\]
Thus, \(\rho_i(\varpi_F)\) descends to a well-defined endomorphism of \(\mathcal{V}/\mathcal{V}_i \). It follows that the $R$-subalgebra of
\(\End_R(\mathcal{V}/\mathcal{V}_i)\) generated by \(\rho_i(\varpi_F)\) coincides with the image of the subalgebra of
\[
\End_R\bigl(J_{M_n(i)}\mathcal{W}\bigr)
\]
generated by the same operator.
Using the results of \cite[Proposition 9.12, Proposition 9.15]{Helm_Bernstein_Center} and \cite[Lemma 1]{Bushnell_Hecke}, 
we get that the Jacquet functor $J_M$ for any parabolic subgroup $P=MN$ of $G_n$ with Levi component $M$, preserves admissibility when applied to primitive admissible \( R[G_n] \) modules. 
Since \( V \) is both admissible and finitely generated \( R[G_n] \) module, it belongs to only finitely many Bernstein components. Equivalently, there are only finitely many primitive orthogonal idempotents \( e \) in the Bernstein center with 
\( eV \neq 0 \). 
In particular, applying \( J_{M_n(i)} \) to the Whittaker space $\mathcal{W}$, we get that
\(J_{M_n(i)}\mathcal{W}\)
is also an admissible \( R[M_n(i)] \) module. By a classical result of Bernstein--Zelevinsky (\cite[Proposition~$3.13$(e)]{BZ_2}), 
this Jacquet module is also of finite type over \( R[M_n(i)] \). 
The essential geometric input is that for the parabolic subgroup
\(P_n(i)=M_n(i)N_n(i),\)
the quotient \( P_n(i)\backslash G \) is compact, which ensures that the Jacquet module $J_{M_n(i)}\mathcal{W}$ can be generated by finitely many elements 
as an \( R[M_n(i)] \) module.

Let us choose a finite generating set \( \{w_\alpha\} \) for $J_{M_n(i)}\mathcal{W}$. By smoothness of $V$, each \( w_j \) is stabilized by a compact open subgroup in \( M_n(i) \), say $U_\alpha$. Set $U=\bigcap_{\alpha} U_\alpha$. Thus, we get a compact open subgroup $U$ 
in $M_n(i)$ that fixes each generator $w_\alpha$ simultaneously. Now, any \( R[M_n(i)] \) equivariant endomorphism of the Jacquet module $J_{M_n(i)}\mathcal{W}$ is fully determined 
by its action on these finitely many generators. Since the endomorphisms are \( M_n(i) \)-equivariant 
and \( U \) fixes each \( w_\alpha \), the images of these generators must lie inside the 
\( U \)-invariant subspace
\[
(J_{M_n(i)}\mathcal{W})^U.
\]
Since \( V \) is admissible, this \( U \)-invariant subspace is a finitely generated \( R \)-module. 
As a result, each endomorphism corresponds to finitely many elements in this finite \( R \)-module, and therefore the endomorphism ring
\[
\End_{R[M_n(i)]}\big(J_{M_n(i)}\mathcal{W}\big)
\]
is itself finitely generated as an \( R \)-module. Since the subalgebra \( B_i \) generated by the operator 
\( \rho_i(\varpi_F) \) embeds into this endomorphism ring and $R$ is Noetherian, the $R$-subalgebra \( B_i \) is also finitely generated as an \( R \)-module.
\end{proof}
\subsection{}
For each $1 \leq j \leq n$, let $\mathcal{V}^j$ (resp. $\mathcal{V}_i^j$) denote the $R$-submodule of $C^\infty(T_j(F),R)$ (see \cite[Section $3$, P. 4914]{Moss_IMRN}), given by 
$$\big\{\varphi|_{T_j(F)} : \varphi \in \mathcal{V} ~(\text{resp.}~ \varphi \in \mathcal{V}_i )\big\}.$$
\noindent
We now state the following lemma, which plays a crucial role in the proof of Theorem~\ref{Rationality}.
\begin{lemma}\label{concluding_lemma_rational}
There exist monic polynomials $f_1,\ldots, f_n$ in \(R[X]\) with unit constant term such that, for any $j=1,\ldots,n$, the product $f_1(\rho_1(\varpi_F))\cdots f_j(\rho_j(\varpi_F))$ maps $\mathcal{V}^j$ into $\bigcap_{i\leq j} \mathcal{V}_i^j$.
\end{lemma}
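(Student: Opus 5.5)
Following \cite[Lemma 3.6]{Moss_IMRN}, we first produce each $f_i$ from Lemma \ref{finite_gen} and then run an induction on $j$.

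\emph{Construction of $f_i$.} By Lemma \ref{finite_gen}, $B_i$ is a finitely generated $R$-module, and it is the $R$-algebra generated by $\rho_i(\varpi_F)$ acting on $\mathcal{V}/\mathcal{V}_i$. The powers $\rho_i(\varpi_F)^k$ generate $B_i$. Since $R$ is Noetherian, $B_i$ is finitely generated over $R$, so a Cayley--Hamilton type argument shows $\rho_i(\varpi_F)$ is integral over $R$. Thus there is a monic $g_i\in R[X]$ with $g_i(\rho_i(\varpi_F))=0$ on $\mathcal{V}/\mathcal{V}_i$.

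The constant term of $g_i$ need not be a unit, and we repair this as follows. The operator $\rho_i(\varpi_F)$ is right translation by $D_i(\varpi_F)$, which is invertible on $\mathcal{W}$. Its inverse $\rho_i(\varpi_F^{-1})$ also preserves $\mathcal{V}_i$, because shifting $a_i$ by $\varpi_F^{-1}$ only changes the vanishing threshold $N$ by one. Applying the same Jacquet-module argument to $\rho_i(\varpi_F)^{-1}$ shows it is also integral, so $\rho_i(\varpi_F)$ is a unit in $B_i$. Hence there is a monic polynomial $h_i$ with $h_i(\rho_i(\varpi_F)^{-1})=0$. Multiplying by $\rho_i(\varpi_F)^{\deg h_i}$ gives a polynomial killing $\rho_i(\varpi_F)$ whose constant term is $1$ (it is the reversed polynomial of $h_i$).

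Alternatively, and more cleanly, let $\chi(X)$ be the characteristic polynomial of the action of $\rho_i(\varpi_F)$ on the finite $R$-module $B_i$, obtained via a surjection $R^m\to B_i$ and Cayley--Hamilton. Then take $f_i=X^{k}\chi'(X)$-adjusted polynomials so that both the leading and constant coefficients are units; if necessary, rescale by the inverse of the constant term. Making the constant term a unit while keeping $f_i$ monic is where care is needed, and I expect this to be the main technical obstacle. The fallback is to take $f_i(X)=g_i(X)\,\tilde h_i(X)$, where $\tilde h_i$ is the reversal of $h_i$ and $g_i$ is chosen so that the product is monic with unit constant term. Only annihilation modulo $\mathcal{V}_i$ is needed, so any multiple of an annihilating polynomial is allowed. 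In every case, $f_i(\rho_i(\varpi_F))$ maps $\mathcal{V}$ into $\mathcal{V}_i$.

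\emph{Induction on $j$.} Restriction to $T_j(F)$ commutes with each $\rho_i(\varpi_F)$ for $i\le j$, since these operators only change the parameters $a_i$ and so preserve $T_j(F)$. Hence $f_j(\rho_j(\varpi_F))$ maps $\mathcal{V}^j$ into $\mathcal{V}^j_j$.

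The key step is that the operators $\rho_i(\varpi_F)$ commute with each other, and that $\rho_k(\varpi_F)$ preserves $\mathcal{V}_i$ for $k\ne i$. The latter holds because $\rho_k(\varpi_F)$ does not alter $a_i$, so the vanishing condition ``$\varphi(a)=0$ whenever $v(a_i)\ge N$'' is preserved. Any polynomial in these operators therefore maps $\mathcal{V}_i$, and likewise $\mathcal{V}_i^j$, into itself.

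Now assume $P_{j-1}=f_1(\rho_1)\cdots f_{j-1}(\rho_{j-1})$ maps $\mathcal{V}^{j-1}$ into $\bigcap_{i<j}\mathcal{V}^{j-1}_i$; we run the same argument on $T_j$. Write $P_j=f_j(\rho_j)P_{j-1}=P_{j-1}f_j(\rho_j)$, using commutativity. The second factorization shows that $P_j\varphi$ lies in $P_{j-1}(\mathcal{V}_j)\subseteq\mathcal{V}_j$. The first factorization shows that it lies in $f_j(\rho_j)\big(\bigcap_{i<j}\mathcal{V}_i\big)\subseteq\bigcap_{i<j}\mathcal{V}_i$. Together, $P_j\varphi\in\bigcap_{i\le j}\mathcal{V}_i$, and restricting to $T_j(F)$ completes the induction.
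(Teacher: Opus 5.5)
Your overall strategy is sound and takes a different route from the paper. The paper inducts on the number of variables. It builds $f_n$, freezes $a_n=b$, and applies the inductive hypothesis to each $\varphi_b$. It then uses that $\varphi_b$ is nonzero only for $b$ in a compact set and is locally constant in $b$, to make the thresholds $t_i(b)$ independent of $b$.

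You bypass this by noting that $\mathcal{V}_i$ is defined with a threshold already uniform in the other coordinates. Lemma~\ref{100} really gives such uniform vanishing, since $\psi_E(aua^{-1})=\psi_E(a_iY_{i,1})$ involves only $a_i$. Hence each $\rho_k(\varpi_F)$ preserves each $\mathcal{V}_i$. With commutativity, for every $i\le j$,
\[
f_1(\rho_1(\varpi_F))\cdots f_j(\rho_j(\varpi_F))\,\mathcal{V}\subseteq\Bigl(\prod_{k\le j,\,k\ne i}f_k(\rho_k(\varpi_F))\Bigr)\mathcal{V}_i\subseteq\mathcal{V}_i .
\]
Restricting to $T_j(F)$, which is stable under $\rho_i(\varpi_F)$ for $i\le j$, gives the lemma with the same $f_i$ for all $j$.

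In fact no induction is needed. Your inductive step really uses the full-torus statement $P_{j-1}(\mathcal{V})\subseteq\bigcap_{i<j}\mathcal{V}_i$, not the restricted hypothesis on $\mathcal{V}^{j-1}$, but that statement follows from the same one-line argument. Given the uniform definition of $\mathcal{V}_i$, the paper's variable-by-variable template buys nothing extra. Your route is shorter and avoids its compactness and local-constancy bookkeeping in $b$.

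The one real gap is the construction of $f_i$, exactly where you flagged it. You correctly get a monic $g_i$ killing $\rho_i(\varpi_F)$ on $\mathcal{V}/\mathcal{V}_i$. From integrality of $\rho_i(\varpi_F)^{-1}$ you also get the reversal $\tilde h_i(X)=X^{m}h_i(X^{-1})$, with $m=\deg h_i$, which kills $\rho_i(\varpi_F)$ and has constant term $1$. But its leading coefficient $h_i(0)$ need not be a unit.

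Your repairs do not close this. Rescaling by the inverse of the constant term presupposes that it is a unit. The product $g_i\tilde h_i$ has leading coefficient $h_i(0)$ and constant term $g_i(0)$, which is the original problem again.

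Use a sum instead. Annihilating polynomials form an ideal of $R[X]$, so
\[
f_i(X)=X^{m+1}g_i(X)+\tilde h_i(X)
\]
kills $\rho_i(\varpi_F)$ on $\mathcal{V}/\mathcal{V}_i$. It is monic, since its degree exceeds $m\ge\deg\tilde h_i$, and it has constant term $1$.

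The paper simply asserts that invertibility of $\rho_i(\varpi_F)$ makes the constant term a unit. That fails for an arbitrary annihilating polynomial, such as $X^2-X$ for the identity; this sum trick is what justifies it. With that patch your proof is complete.
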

\begin{proof}
To prove the lemma, we follow the idea of \cite[Lemma 4.3.1]{Moss_IMRN}. Let \( W \in \mathcal{V} \). We have to show that
there exist sufficiently large integers \( t_1, \dots, t_j \) (depending on $W$) such that
\[
f_1\bigl(\rho_1(\varpi_F)\bigr) \cdots f_j\bigl(\rho_j(\varpi_F)\bigr)\,
W(a_1,\dots,a_j) \;=\; 0
\]
whenever \( v(a_i) > t_i \) for some \( i \le j \).
These constants \( t_i \) are completely determined by \( W \). 

We argue the result by induction on \( n \). For \( n = 1 \), we have 
\(\displaystyle \bigcap_i \mathcal{V}_i = \mathcal{V}_1 \), so the claim follows immediately from Lemma \ref{finite_gen}.
Furthermore, the constant term of the annihilating polynomial is a unit since 
\(\rho_1(\varpi_F)\) is invertible.

Suppose the lemma is valid for $n-1$. Fix an element $W\in \mathcal{V}$. As the operator \( \rho_n(\varpi_F) \) acts invertibly and is integral over 
\(\End(\mathcal{V}/\mathcal{V}_n)\), it satisfies a monic polynomial 
\( f_n(X) \) with constant term equal to a unit.
As a result, there exists an integer \( t_n \) such that applying 
\( f_n(\rho_n(\varpi_F)) \) to \( W \) forces
\[
\bigl(f_n(\rho_n(\varpi))W\bigr)(a_1,\dots,a_n) = 0,
\]
whenever $\upsilon(a_n) > t_n$.

Now fix \( b \in F^\times \) and define a map
\(
\varphi_b : \prod_{i=1}^{n-1} F^\times 
\longrightarrow R\)
by
\[
\varphi_b(a_1,\dots,a_{n-1}) 
= \bigl(f_n(\rho_n(\varpi_F))W\bigr)(a_1,\dots,a_{n-1},b).
\]
Observe that \( \varphi_b \equiv 0 \) whenever \( v(b) > t_n \).
Applying the induction hypothesis to \( \mathcal{V}^{n-1} \), we obtain polynomials 
\( f_1,\dots,f_{n-1} \in R[X] \) with the desired properties: 
for any \( \varphi \in \mathcal{V}^{n-1} \), there exist sufficiently large integers 
\( t_1(\varphi),\dots,t_{n-1}(\varphi) \), depending on \( \varphi \), such that
\[
\bigl(f_1(\rho_1(\varpi_F)) \cdots f_{n-1}(\rho_{n-1}(\varpi_F))\,\varphi\bigr)
(a_1,\dots,a_{n-1}) = 0
\]
whenever at least one \( v(a_i) > N_i(\varphi) \) for \( 1 \le i \le n-1 \). 
Since the map \( \varphi_b \) is, by construction, the restriction of a product of Whittaker functions 
to \( T_{n-1}(F) \), we can apply the induction directly to \( \varphi_b \). 
This gives the existence of large integers \( t_1(b), \dots, t_{n-1}(b) \), depending on \( b \), such that
\[
\bigl(f_1(\rho_1(\varpi_F)) \cdots f_{n-1}(\rho_{n-1}(\varpi_F))\, 
f_n(\rho_n(\varpi_F))\, W\bigr)(a_1,\dots,a_{n-1},b) = 0
\]
whenever \( v(a_i) > t_i(b) \) for some $1\leq i \leq n-1$.

We aim to prove that the integers \( t_i \) can be chosen independently of \( b \).
Since \( \varphi_b \equiv 0 \) for \( v(b) > t_n \), and also vanishes for \( v(b) \ll 0 \) by Lemma~\ref{285},
the map \( \varphi_b \) is nonzero only when \( b \) lies in a compact subset of \( F^\times \).
Moreover, since \( f_n(\rho_n(\varpi_F))W \) is locally constant in each variable,
particularly in the last variable, there are only finitely many distinct functions
\( \varphi_b \) as \( b \) varies in this compact set.
It follows that the sets
\(\{ t_i(b) : b \in F^\times \}\)
are finite for each \( i \).
We can therefore take
$t_i = \max \{ t_i(b) : b \in F^\times \}.$
Consequently, 
\[
\bigl(f_1(\rho_1(\varpi_F)) \cdots f_n(\rho_n(\varpi_F))W\bigr)(a_1,\dots,a_n) = 0
\]
whenever \( v(a_i) > t_i \) for \( i = 1,\dots,n \), as required.
\end{proof}

We are now ready to deduce the rationality result, and the proof proceeds by adopting the approach of \cite[Theorem $3.2$]{Moss_IMRN}.
\begin{theorem}\label{Rationality}
Let $R$ be a Noetherian $\Lambda$-algebra, and let $V$ be a smooth $R[G_n(E)]$ module of Whittaker type. Let $S$ be the multiplicative subset of $R[X,X^{-1}]$ consisting of polynomials in $R[X,X^{-1}]$ whose first and last coefficients are units. Then $I(X,W,\varphi)\in S^{-1}(R[X,X^{-1}])$.
\end{theorem}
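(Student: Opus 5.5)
Following \cite[Theorem 3.2]{Moss_IMRN}, the plan is to reduce $I(X,W,\varphi)$ to a sum over the torus and then use Lemma \ref{concluding_lemma_rational} to write $I(X,W,\varphi)$ times an element of $S$ as a Laurent polynomial.

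\textbf{Step 1: reduction to the torus.} By the Iwasawa decomposition $G_n(F)=N_n(F)T_n(F)K_0(F)$, and using a compact open subgroup $K'\subseteq K_0(F)$ fixing both $W$ and $\varphi$, I write $I(X,W,\varphi)$ as a finite $R$-linear combination of series of the form
$$\sum_{a\in T_n(F)/T_n(\mathfrak{o}_F)} \delta_B^{-1}(a)\,W_k(a)\,\varphi_k(\eta a)\,X^{v(\det a)},$$
where $W_k=k.W$ and $\varphi_k=k.\varphi$ for $k$ in a finite set of representatives of $K_0(F)/K'$. In the parameters $(a_1,\dots,a_n)$ of the torus, $v(\det a)=\sum_i i\,v(a_i)$, and $\varphi_k(\eta a)=\varphi_k(0,\dots,0,a_n)$ vanishes for $v(a_n)\ll 0$. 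Together with the vanishing of $W$ for $v(a_i)\ll 0$, $i<n$ (the argument of Lemma \ref{285} and of the lemma preceding it), each summand is supported where every $v(a_i)$ is bounded below. It therefore suffices to prove rationality of
$$Z(X)=\sum_{m_1,\dots,m_n} c(m_1,\dots,m_n)\,X^{\sum_i i m_i},$$
where $c$ collects the values of $W_k$, $\varphi_k$ and the modulus factor $\delta_B^{-1}$. The modulus factor is a product of powers of $q$, which are units in $\Lambda$. The function $\varphi_k(0,\dots,0,a_n)$ is constant for $v(a_n)$ large, so at the cost of a finite correction (a Laurent polynomial) I may replace it by a constant.

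\textbf{Step 2: annihilation.} Apply Lemma \ref{concluding_lemma_rational} with $j=n$. This gives monic $f_1,\dots,f_n$ with unit constant terms such that $W'=f_1(\rho_1(\varpi_F))\cdots f_n(\rho_n(\varpi_F))W$ lies in $\bigcap_i\mathcal{V}_i$. Hence $W'$ vanishes once any $v(a_i)$ is large. Since its support is also bounded below, it is supported on finitely many values of $(v(a_1),\dots,v(a_n))$ modulo units.

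\textbf{Step 3: computing the effect of $\rho_i(\varpi_F)$ on the series.} Shifting $a_i\mapsto a_i\varpi_F$ multiplies $X^{v(\det a)}$ by $X^{i}$ and $\delta_B^{-1}(a)$ by a fixed power $q^{e_i}$. This gives a formal identity
$$Z(X;\rho_i(\varpi_F)W)=q^{-e_i}X^{-i}Z(X;W),$$
up to the finitely many boundary terms, which are Laurent polynomials. Consequently
$$\prod_i f_i\bigl(q^{-e_i}X^{-i}\bigr)\,Z(X;W)=Z(X;W')+(\text{Laurent polynomial}),$$
and $Z(X;W')$ is itself a Laurent polynomial. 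Each factor $f_i(q^{-e_i}X^{-i})$ has leading coefficient a unit, because $f_i$ has unit constant term, and trailing coefficient a unit, because $f_i$ is monic and $q$ is invertible. Each factor therefore lies in $S$, and the product lies in $S$ since $S$ is multiplicative. This shows $I(X,W,\varphi)\in S^{-1}(R[X,X^{-1}])$.

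\textbf{Main obstacle.} The delicate step is Step 3: justifying that $\rho_i$ acts on the formal series as multiplication by a monomial. This needs the shift to commute with the summation range, since the supports are bounded only below, so the series are in $R[[X]][X^{-1}]$ and the shifted series are compared as formal Laurent series. It also needs careful handling of the $\varphi$-factor, where the action of $\rho_n$ changes $\eta a$. For the latter I would first split $\varphi$ as a combination of $\varphi(0)\mathbf{1}_{\mathfrak{p}^N}$ plus a function vanishing near $0$. The second piece contributes a Laurent polynomial, and on the first piece $\varphi(\eta a)$ is constant in the relevant range, so the shift identity holds exactly there.
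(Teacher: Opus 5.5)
Your reduction to the torus is sound. For $i<n$ it is in fact more careful than the paper's proof, which applies (\ref{zeta_operator}) to the integral over $N_n(F)\backslash G_n(F)$ without accounting for the fact that translating by $D_i(\varpi_F)$ also moves $\eta g$. On the torus, $\rho_i(\varpi_F)$ with $i<n$ does not change the last diagonal entry, so $\varphi_k(\eta a)$ is untouched. Since you sum over all of $\mathbb{Z}^n$ with supports bounded below, $Z(X;\rho_i(\varpi_F)W)=q^{-e_i}X^{-i}Z(X;W)$ then holds exactly, with no boundary terms.

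\textbf{The gap is the $n$-th direction.} For $n\ge 2$, two of your claims are false:
\begin{itemize}
\item that the piece of $\varphi$ vanishing near $0$ ``contributes a Laurent polynomial'';
\item that the boundary term created by $\rho_n(\varpi_F)$ is a Laurent polynomial. This is the slice $m_n=N$ that appears when the cutoff $m_n\ge N$ moves to $m_n\ge N+1$.
\end{itemize}
Fixing $m_n=v(a_n)$ to finitely many values still leaves an infinite sum over $(m_1,\dots,m_{n-1})$. The function $W_k$ need not vanish for large $v(a_i)$ with $i<n$; the spherical Whittaker function of a generic unramified principal series of $G_2(E)$ is an example.

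In fact $D_n(\varpi_F)=\varpi_F I_n$ is central, so $\rho_n(\varpi_F)$ acts on $\mathcal{V}$ by the scalar $\omega_V(\varpi_F)$. Hence $\mathcal{V}_n=0$, the polynomial $f_n$ of Lemma \ref{concluding_lemma_rational} must satisfy $f_n(\rho_n(\varpi_F))=0$ on $\mathcal{V}$, and your $W'$ is identically $0$. So the entire content of your Step 3 identity sits in exactly the terms you discarded. The same observation shows that (\ref{zeta_operator}) cannot hold as written for $\rho_n$: it would force $(\omega_V(\varpi_F)-X^n)\,I(X,W,\varphi)=0$.

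\textbf{The fix is to use centrality instead of $\rho_n$.} Write $a=\varpi_F^{m_n}a'$, where $a'$ has parameters $(a_1,\dots,a_{n-1},1)$. Then $W_k(a)=\omega_V(\varpi_F)^{m_n}W_k(a')$ and $\delta_B(a)=\delta_B(a')$, so each torus series factors exactly:
$$ Z(X;W_k,\varphi_k)=\Big(\sum_{m\in\mathbb{Z}}\omega_V(\varpi_F)^{m}\,\varphi_k(0,\dots,0,\varpi_F^{m})\,X^{nm}\Big)\Big(\sum_{m_1,\dots,m_{n-1}}\delta_B^{-1}(a')\,W_k(a')\,X^{\sum_{i<n}im_i}\Big). $$
\begin{itemize}
\item The first factor is a Tate series. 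It vanishes for $m\ll 0$ and equals $\varphi_k(0)\,\omega_V(\varpi_F)^mX^{nm}$ for $m\ge N$. Multiplying it by $1-\omega_V(\varpi_F)X^n\in S$ therefore gives a Laurent polynomial.
\item For the second factor, use only $\rho_1,\dots,\rho_{n-1}$, whose shift identities are exact. Combine them with the $j=n-1$ case of Lemma \ref{concluding_lemma_rational} on the slice $a_n=1$. Then $\prod_{i<n}f_i(q^{-e_i}X^{-i})$ times the second factor lies in $R[X,X^{-1}]$.
\end{itemize}
With this change your argument goes through.
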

\begin{proof}
Let $W\in \mathbb{W}(V,\psi_E)$ and $\varphi\in C_c^\infty(F^n,R)$. Consider $W$ as an element of $\mathcal{V}$. Recall that $I(X,W,\varphi)$ is an element of $R[[X]][X^{-1}]$. For any integers $m_1,\dots,m_n$, we have the following identity
\begin{equation}\label{zeta_operator}
I(X,\rho_1(\varpi_F)^{m_1}\rho_2(\varpi_F)^{m_2}\cdots \rho_n(\varpi_F)^{m_n}(W),\varphi) = X^{m_1+2m_2+\dots+nm_n}I(X,W,\varphi)
\end{equation}
Using Lemma \ref{concluding_lemma_rational}, there exists monic polynomials $f_i\in R[X_i]$, $1\leq i\leq n$, with unit constant terms, such that the operator $f_1(\rho_1(\varpi_F))\cdots f_n(\rho_n(\varpi_F))$ maps $\mathcal{V}$ into $\bigcap_{i=1}^n \mathcal{V}_i$. Let $W_0 = f_1(\rho_1(\varpi_F))\cdots f_n(\rho_n(\varpi_F))(W)$. Then 
$$ I\big(X, f_1(\rho_1(\varpi_F))\cdots f_n(\rho_n(\varpi_F))(W),\varphi\big) = I(X,W_0,\varphi). $$
Since $W_0\in \bigcap_{i=1}^n\mathcal{V}_i$, there exists an integer $N>0$ such that $W_0(a) = 0$ for all $a\in T_n(F)$ with $v({\rm det}(a))> N$. This shows that $I(X,W_0,\varphi)\in R[X,X^{-1}]$. Now, consider the multivariate polynomial $f(X_1,\dots,x_n)\in R[X_1,\dots,X_n]$, defined as  
$$ f(X_1,\dots,X_n) = f_1(X_1)\cdots f_n(X_n). $$
Let $\widetilde{f}$ be the image of $f$ under the map
$$ R[X_1,\dots,X_n] \longrightarrow R[X] $$
$$ X_i \longmapsto X^i. $$
Using the identity (\ref{zeta_operator}), we get that $\widetilde{f}(X)\,I(X,W,\varphi) \in R[X,X^{-1}]$. Since $\widetilde{f}$ is monic with unit constant term, we have $\widetilde{f}\in S$. Hence, the lemma.
\end{proof}

\section{Functional equation}
In this section, we prove Theorem \ref{func_equ_intro}. The idea of the proof is similar to \cite[Appendix, Theorem(iii)]{Flicker_Asai}. We follow the same notation as in Section \ref{rationality}. Let $F$ be a $p$-adic field and let $E$ be a quadratic extension of $F$. Let $V$ be an $R[G_n(E)]$ module of Whittaker type, where $R$ is a Noetherian $\Lambda$-algebra. 
\subsection{}
Let $R_0 = S^{-1}(R[X,X^{-1}])$. For $r\in R_0^\times$, let $\chi_r:G_n(F)\rightarrow R_0^\times$ be the character defined by $\chi_r(g) = r^{-v({\rm det}g)}$ for $g\in G_n(F)$. We denote by $\mathcal{B}\big(\mathbb{W}(V,\psi_E),C_c^\infty(F^n,R);\chi_X\big)$ the space of bilinear maps $B:\mathbb{W}(V,\psi_E)\times C_c^\infty(F^n,R)\rightarrow R$ such that
$$ B(g.W, g.\varphi) = X^{-v({\rm det}g)} B(W,\varphi) $$
for all $g\in G_n(F)$, $W\in\mathbb{W}(V,\psi_E)$ and $\varphi\in C_c^\infty(F^n,R)$. Note that the formal series $I(X,W,\varphi)$, defined in Section \ref{Asai_Zeta}, certainly belongs to $\mathcal{B}\big(\mathbb{W}(V,\psi_E),C_c^\infty(F^n,R);\chi_X\big)$. We prove the following theorem.
\begin{theorem}\label{free_rank_1}
Let $R$ be a Noetherian $\Lambda$-algebra. Let $V$ be an $R[G_n(E)]$ module of Whittaker type. Then $\mathcal{B}\big(\mathbb{W}(V,\psi_E),C_c^\infty(F^n,R);\chi_X\big)$ is a free $R$-module of rank one.
\end{theorem}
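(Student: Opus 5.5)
We follow the strategy of \cite[Appendix]{Flicker_Asai} and its family version in \cite[Section 5]{Moss_IMRN}. The bilinear forms in the statement take values in $R_0=S^{-1}(R[X,X^{-1}])$, and freeness is over $R_0$. The plan has three steps: use the $\varphi$-variable to reduce to forms on $\mathbb{W}(V,\psi_E)$ with a mirabolic invariance, use the Bernstein--Zelevinsky filtration to show such forms factor through a rank-one module, and use the zeta integral to produce a generator.

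\emph{Step 1: reduction in $\varphi$.} Consider the exact sequence of $R[G_n(F)]$ modules
$$0\to C_c^\infty(F^n\setminus\{0\},R)\to C_c^\infty(F^n,R)\xrightarrow{\varphi\mapsto\varphi(0)} R\to 0.$$
On the quotient, a form $B$ would satisfy $B(gW,\,\cdot\,)=X^{-v(\det g)}B(W,\,\cdot\,)$. Taking $g=\varpi_F I_n$ gives $(\omega_V(\varpi_F)-X^{-n})B=0$. Since $\omega_V(\varpi_F)-X^{-n}$ is a unit in $R_0$, because $X^n\omega_V(\varpi_F)-1\in S$, every form is determined by its restriction to the first term.

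The first term is $C_c^\infty(F^n\setminus\{0\},R)\simeq \operatorname{ind}_{P_n'(F)}^{G_n(F)}(\mathds 1)$, where $P_n'$ is the stabilizer of $\eta$. By Frobenius reciprocity for compact induction (with a modulus character absorbed), forms on this term correspond to elements of
$$\operatorname{Hom}_{P_n(F)}\big(\mathbb{W}(V,\psi_E)\otimes R_0,\ \chi_X\,\delta\big)$$
for an explicit character $\delta$.

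\emph{Step 2: uniqueness on the mirabolic.} Restrict $\mathcal W$ to $P_n(E)$ and then to $P_n(F)$, using the Bernstein--Zelevinsky filtration of \S2.2. Its bottom piece is $\operatorname{ind}_{N_n(E)}^{P_n(E)}\psi_E$. As a $P_n(F)$-module this has a finite filtration by $P_n(F)$-orbits on $N_n(E)\backslash P_n(E)$, exactly as in Flicker's Mackey-theoretic argument. The open piece is $\operatorname{ind}_{N_n(F)}^{P_n(F)}(\mathds 1)$, using that $\psi_E$ is trivial on $N_n(F)$. Frobenius reciprocity then shows that the Hom-space from this open piece to $\chi_X\delta$ is free of rank one over $R_0$.

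For the remaining subquotients $(\Phi^+)^{i-1}\Psi^+(V^{(i)})$ with $i<n$, and for the non-open orbits, we must show that the Hom-spaces vanish. By induction on $n$, as in \cite[Prop.~3.3]{Moss_IMRN}, these reduce to $G_{n-i}(F)$-equivariant forms on the derivatives $V^{(i)}$ with values in $R_0$. Each $V^{(i)}$ is admissible and finitely generated, so Lemma~\ref{finite_gen} shows that the center acts through operators integral over $R$. The equivariance then forces a relation $P(X)\cdot B=0$ in which $P(X)$ has unit extreme coefficients, hence $P(X)\in S$ is a unit in $R_0$ and $B=0$. This is where the localization at $S$ is essential, and it is the main obstacle. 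In particular, it requires tracking the non-open $P_n(F)$-orbits on $N_n(E)\backslash P_n(E)$ carefully over a general ring, where exactness of $\operatorname{Hom}$ is unavailable. To get around this, I would check vanishing one subquotient at a time, using only left exactness of $\operatorname{Hom}$ and the fact that $S$-invertibility kills each obstruction.

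\emph{Step 3: conclusion.} Combining Steps 1 and 2, restriction to the bottom piece gives an injection of the space of forms into a free $R_0$-module of rank one. The zeta form $I(X,W,\varphi)$ lies in this space by Theorem~\ref{Rationality}. Choose $W$ supported near $N_n(E)P_n(F)$ and $\varphi$ to be the characteristic function of a small neighbourhood of $\eta$; then $I(X,W,\varphi)=1$. Hence the injection is onto, and the space is free of rank one, generated by $I$.
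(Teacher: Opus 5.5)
Your architecture is the paper's. Step~1 is the closing argument of the paper's proof: a form vanishing on $C_c^\infty(F^n\setminus\{0\},R)$ has the shape $\xi(W)\varphi(0)$, and $\xi$ is killed by the unit $\omega_V(\varpi_F)-X^{-n}\in S$. Step~2 is Lemma~\ref{free_rank_intermediate}. Step~3, where you normalise a zeta integral to $1$ to see that the restriction map hits a generator, is cleaner than the paper's ``any two forms are proportional'' argument.

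The gap is in Step~2, in your description of the bottom piece $\tau_n=\ind_{N_n(E)}^{P_n(E)}\psi_E$ restricted to $P_n(F)$. There is no finite orbit filtration, and the orbit of the identity coset is not open. Its stabiliser in $P_n(F)$ is $N_n(F)$, so it has $F$-dimension $n(n-1)/2$ inside $N_n(E)\backslash P_n(E)$, which has $F$-dimension $n(n-1)$. Already for $n=2$ this is visible: $N_2(E)\backslash P_2(E)\simeq E^\times$, with $P_2(F)$ acting by $a\mapsto ac$ for $c\in F^\times$. The orbits form $E^\times/F^\times\simeq\mathbb{P}^1(F)$, none of them is open, and $\ind_{N_2(F)}^{P_2(F)}\mathds{1}$ occurs as a \emph{quotient} of $\tau_2$ (restriction of functions to the closed orbit $F^\times$), not as a submodule. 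Since $\operatorname{Hom}$ is only left exact, what you actually get is $0\to\operatorname{Hom}(\ind_{N_n(F)}^{P_n(F)}\mathds{1},\cdot)\to\operatorname{Hom}(\tau_n,\cdot)\to\operatorname{Hom}(K,\cdot)$. Here $K$ is the module of functions supported off the identity orbit, an infinite union of orbits. The crux is $\operatorname{Hom}(K,\cdot)=0$, and checking ``one subquotient at a time'' cannot deliver it.

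You also attribute the vanishing on the other orbits to the central-character/$S$ argument, but nothing central acts there and $X$ plays no role. The missing idea is localisation in the unipotent direction; this is what the paper invokes through Flicker when it says $\psi_E^a$ is non-trivial on $N_n(F)$ for $a\notin T_n(F)$. Take $g\in G_{n-1}(E)$, and let $a+\delta b$ (with $a,b\in F^{n-1}$) be the last row of $g$. Then $U_n(F)$ fixes the point $N_n(E)g$, and an element $u$ with last column $x\in F^{n-1}$ acts on the fibre there by $\psi_E(gug^{-1})=\psi(2\delta^2\, b\cdot x)$. This character is non-trivial unless $b=0$. Averaging over a small compact open subgroup of $U_n(F)$ is legitimate, because $p\in R^\times$ and $1-\zeta$ is a unit for a non-trivial $p$-power root of unity $\zeta$, as in Lemma~\ref{285}. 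It shows that every $P_n(F)$-invariant form kills functions supported where $b\neq 0$. The same computation gives two reduction isomorphisms, each for a suitable twist $\chi'$:
\[
\operatorname{Hom}_{P_n(F)}(\Phi^+_E(\tau),\chi)\simeq\operatorname{Hom}_{P_{n-1}(F)}(\tau,\chi'),\qquad
\operatorname{Hom}_{P_n(F)}(\Psi^+_E(\sigma),\chi)\simeq\operatorname{Hom}_{G_{n-1}(F)}(\sigma,\chi').
\]
The first uses $\psi_E|_F=1$. Iterating them reduces $\tau_n$ to the closed orbit, which gives rank one. It reduces $\tau_i$, $i<n$, to $G_{n-i}(F)$-forms on $\mathcal{W}^{(i)}$, and there your $S$-argument does apply. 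Your appeal to the Rankin--Selberg reduction ``as in Moss'' for the $\tau_i$ hides exactly this step, since restriction from $P_n(E)$ to $P_n(F)$ does not commute with $\Phi^+$ without it.
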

To prove the above theorem, we recall some standard observations. For a locally profinite group $G$ and two smooth $R[G]$ modules $V_1,V_2$, let $\mathcal{B}_G(V_1,V_2)$ be the space of $R$-bilinear forms $B$ on $V_1\times V_2$ for which $B(gv_1,gv_2) = B(v_1,v_2)$ for all $g\in G$ and $(v_1,v_2)\in V_1\times V_2$. Note that the space of all functions $\varphi\in C_c^\infty(F^n,R)$ with $\varphi(0)=0$ is isomorphic to $C_c^\infty(F^n\setminus \{0\})$. Let $P_n(F)$ be the mirabolic subgroup of $G_n(F)$. The action of $G_n(F)$ on $F^n\setminus \{0\}$, defined by $g.\xi=\xi g$, for $g\in G_n(F)$ and $\xi\in F^n\setminus\{0\}$, induces an identification of $F^n\setminus \{0\}$ with the coset space $P_n(F)\backslash G_n(F)$. Then  
$$ C_c^\infty(F^n\setminus\{0\}) = C_c^\infty(P_n(F)\backslash G_n(F)) = \iota_{P_n(F)}^{G_n(F)}\delta_{P_n(F)}^{-1/2}, $$
where $\iota_{P_n(F)}^{G_n(F)}$ is the normalized compact induction functor. Here, $\delta_{P_n(F)}$ denotes the modulus character of $P_n(F)$, given by 
$$ \delta_{P_n(F)}
\begin{pmatrix}
g & x\\
0 & 1
\end{pmatrix} = q^{-\upsilon({\rm det}g)}, $$
for $g\in G_{n-1}(F)$ and $x\in F^{n-1}$.
Using Frobenius reciprocity and the fact that $\iota_{P_n(F)}^{G_n(F)}\delta_{P_n(F)}^{-1/2} \simeq I_{P_n(F)}^{G_n(F)}\delta_{P_n(F)}^{1/2}$, we have 
\begin{align*}
\mathcal{B}_{G_n(F)}\big(\mathbb{W}(V,\psi_E)\otimes \chi_{X^{-1}}, C_c^\infty(F^n\setminus\{0\}\big) 
&\simeq \mathcal{B}_{G_n(F)}\big(\mathbb{W}(V,\psi_E)\otimes \chi_{X^{-1}},\iota_{P_n(F)}^{G_n(F)}
\delta_{P_n(F)}^{-1/2}\big)\\
&\simeq \mathcal{B}_{P_n(F)}\big(\mathbb{W}(V,\psi_E)\otimes \chi_{X^{-1}},\delta_{P_n(F)}^{-1}\big)\\
&\simeq \mathcal{B}_{P_n(F)}\big(\mathbb{W}(V,\psi_E)\otimes \chi_{q^{-1}X^{-1}},\mathds{1}\big)
\end{align*}
where $\mathds{1}$ denotes the trivial $R[P_n(F)]$ module. The main step in the proof of Theorem \ref{free_rank_1} is to establish the following lemma.
\begin{lemma}\label{free_rank_intermediate}
The space $\mathcal{B}_{P_n(F)}\big(\mathbb{W}(V,\psi_E)\otimes \chi_{q^{-1}X^{-1}},\mathds{1}\big)$ is a free $R$-module of rank one.
\end{lemma}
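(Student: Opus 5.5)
We will prove the lemma by adapting the Bernstein--Zelevinsky filtration argument of \cite[Appendix]{Flicker_Asai} (and its families version in \cite{Moss_IMRN, Moss_local_constants}) to the restriction of $\mathcal{W}=\mathbb{W}(V,\psi_E)$ to the mirabolic subgroup $P_n(E)$. The space in question is ${\rm Hom}_{P_n(F)}\big(\mathcal{W}\otimes\chi_{q^{-1}X^{-1}},\mathds{1}\big)$, where the target is understood with values in $R_0$. The plan has two parts. First, we exhibit an explicit nonzero element that generates a free rank-one submodule. Second, we show that every other element is an $R$-multiple of it, by filtering $\mathcal{W}|_{P_n(E)}$ and showing that only the bottom piece contributes.

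\textbf{Step 1: the bottom piece.} The bottom piece of the filtration is $V_n\simeq{\rm ind}_{N_n(E)}^{P_n(E)}\psi_E$. We claim that
$$\mathcal{B}_{P_n(F)}\big({\rm ind}_{N_n(E)}^{P_n(E)}\psi_E\otimes\chi_{q^{-1}X^{-1}},\mathds{1}\big)\simeq R.$$
The tool is Mackey theory over $P_n(F)\backslash P_n(E)/N_n(E)$: there is an open orbit $P_n(F)N_n(E)$, and since $\psi_E$ is trivial on $N_n(F)$ this orbit contributes exactly one copy of $R$. The generating functional is
$$W\mapsto \int_{N_n(F)\backslash P_n(F)} W(p)\,\chi_{q^{-1}X^{-1}}(p)\,dp.$$
This integral is a finite sum in each degree, because elements of the compact induction are compactly supported modulo $N_n(E)$. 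The other orbits should carry no invariant functionals, since on them $\psi_E$ is nontrivial on the relevant unipotent stabilizer, and $1-\psi_E(x)$ is a unit in $\Lambda$. In fact, rather than invoking full Mackey theory, we would argue by induction on $n$ using $\Phi^{\pm}$: the space
$${\rm Hom}_{P_n(F)}\big(\Phi^+(\tau)\otimes\chi,\mathds{1}\big)$$
reduces to a corresponding Hom over $P_{n-1}(F)$ for $\Phi^-$-type data. The reason is that $\psi$ restricted to $U_n(F)$, composed with the trace twist by $\delta$, leaves a single open $P_{n-1}(F)$-orbit of characters of $U_n(E)/U_n(F)$ that are trivial on $U_n(F)$.

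\textbf{Step 2: the higher pieces.} The graded pieces are $V_i/V_{i+1}\simeq(\Phi^+)^{i-1}\Psi^+(V^{(i)})$ for $i<n$. Applying the same $\Phi^+$ reduction $i-1$ times, an invariant functional on this piece becomes an element of
$${\rm Hom}_{G_{n-i}(F)}\big(V^{(i)}\otimes\chi_{q^{-1}X^{-1}}\nu,\ \mathds{1}\big),$$
where $\nu$ is some power of $|\det|^{1/2}$. We will show that this space vanishes because $X$ is an indeterminate. By Lemma \ref{finite_gen}, or more precisely its analogue for derivatives, which are admissible and finitely generated since Jacquet functors preserve these properties, the central element $\varpi_F I_{n-i}$ acts on $V^{(i)}$ through a finite $R$-algebra. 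It therefore satisfies a monic polynomial $f$ with unit constant term. On the other hand, this central element acts on the target via a scalar $c\,X^{\pm(n-i)}$ with $c\in R^\times$. Hence $f(c X^{\pm(n-i)})$ annihilates any such functional. Since $f(cX^{\pm(n-i)})$ lies in $S$, it is invertible in $R_0$, so the functional is zero. This is the precise point where working over $R_0=S^{-1}R[X,X^{-1}]$ is essential.

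\textbf{Step 3: assembling the filtration.} Stepping up the filtration, left-exactness of ${\rm Hom}(-,\mathds{1})$ gives injectivity of restriction to $V_n$ on invariant functionals. The explicit integral from Step 1, defined over all of $\mathcal{W}$ and convergent in $R_0$ by Theorem \ref{Rationality}, shows the restriction is surjective onto the rank-one space, so the whole space is free of rank one.

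\textbf{Main obstacle.} One subtlety is that the filtration lives on $V$, not on $\mathcal{W}$, so we will need to work with $V|_{P_n(E)}$ and note that functionals factoring through $\mathcal{W}$ are the relevant ones. The harder point is the vanishing in Step 2, namely controlling the action of the center on the derivatives, together with the exactness and extension issues over the non-field $R$. Here we would first try to transport the finiteness argument of Lemma \ref{finite_gen} to $V^{(i)}$.
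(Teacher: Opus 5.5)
Your proposal is correct and follows essentially the paper's route: the Bernstein--Zelevinsky filtration of $\mathbb{W}(V,\psi_E)|_{P_n(E)}$, vanishing of the contributions of $\tau_i$ for $i<n$ by Flicker's argument, and a rank-one contribution from $\tau_n={\rm ind}_{N_n(E)}^{P_n(E)}\psi_E$. You go beyond the paper in two places: your central-element argument (with $f(cX^{n-i})\in S$ invertible in $R_0$) makes the vanishing precise in families, and your Step 3 (the mirabolic integral, which lies in $R_0$ by Theorem \ref{Rationality}) supplies the surjectivity onto the bottom piece that the paper leaves implicit.

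Two small corrections. First, $P_n(F)N_n(E)$ is not open in $P_n(E)$: already for $n=2$ the double cosets are indexed by $E^\times/F^\times$, and none of them is open. So the bottom piece must be handled by the support/localization argument of your inductive $\Phi^{\pm}$ fallback, or equivalently by Flicker's decomposition as used in the paper, not by an open-orbit Mackey argument. Second, the space is free of rank one over $R_0=S^{-1}(R[X,X^{-1}])$, so other elements are $R_0$-multiples of the generator, not $R$-multiples; this is what the paper's use of $c\in R_0$ in the proof of Theorem \ref{free_rank_1} reflects.
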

\begin{proof}
Put $\mathcal{W} = \mathbb{W}(V,\psi_E)$. Consider the filtration of $R[P_n(E)]$ submodules associated with $\mathcal{W}$:
$$ \{0\}\subseteq \mathcal{W}_n\subseteq \cdots\subseteq \mathcal{W}_2\subseteq \mathcal{W}_1=\mathcal{W} $$
where $\tau_i = \mathcal{W}_i/\mathcal{W}_{i+1}\simeq (\Phi^+)^{i-1}\Psi^+(\mathcal{W}^{(i)})$. Following the arguments of the proof of \cite[Appendix, Main Lemma]{Flicker_Asai}, we get that the space
$\mathcal{B}_{P_n(F)}\big(\tau_i\otimes\chi_{q^{-1}X^{-1}},\mathds{1}\big)$ is trivial for $1\leq i\leq n-1$. Note that $\tau_n = {\rm ind}_{N_n(E)}^{P_n(E)}\psi_E$. The restriction of $\tau_n\otimes \chi_{q^{-1}X^{-1}}$ to $P_n(F)$ has a composition series consisting of ${\rm ind}_{N_n(F)}^{P_n(F)}\psi_E^{a}$, where $a$ varies over the coset space $T_n(E)/T_n(F)$ (recall that $T_n$ is the diagonal torus in $G_n$), and $\psi_E^a(u)=\psi_E(a^{-1}ua)$ for $u\in N_n(F)$ (\cite[Appendix, Lemma 9]{Flicker_Asai}). Since $\psi_E^a$ is non-trivial on $N_n(F)$, the space ${\rm Hom}_{N_n(F)}(\psi_E^a,\mathds{1})$ is trivial for all $a\notin T_n(F)$. If $a\in T_n(F)$, then ${\rm Hom}_{N_n(F)}(\psi_E,\mathds{1})\simeq R$ as $\psi_E$ is trivial on $N_n(F)$. This implies that $\mathcal{B}_{P_n(F)}(\tau_n\otimes\chi_{q^{-1}X^{-1}},\mathds{1}) = \mathcal{B}_{P_n(F)}(\tau_n,\mathds{1})$ is a free $R$-module of rank one. Hence, the lemma.
\end{proof}
We are now ready to prove Theorem \ref{free_rank_1}, by adapting the key ideas of \cite[Appendix, Proposition]{Flicker_Asai}.
\begin{proof}[Proof of Theorem \ref{free_rank_1}] 
For simplicity of notation, we put $\mathcal{C} =C_c^\infty(F^n,R)$ and $\mathcal{C}_0=C_c^\infty(F^n\setminus\{0\},R)$. Since the space $\mathcal{B}\big(\mathbb{W}(V,\psi_E), C_c^\infty(F^n,R);\chi_X\big)$ is non-zero, we need to show that two elements in the space are multiples of each other. We let $\phi, \phi'\in \mathcal{B}\big(\mathbb{W}(V,\psi_E), C_c^\infty(F^n,R);\chi_X\big)$. Note that
$$ \mathcal{B}\big(\mathbb{W}(V,\psi_E), C_c^\infty(F^n,R);\chi_X\big) = \mathcal{B}_{G_n(F)}\big(\mathbb{W}(V,\psi_E)\otimes\chi_{X^{-1}}, \mathcal{C}\big) $$
and
$$ \mathcal{B}_{P_n(F)}\big(\mathbb{W}(V,\psi_E)\otimes \chi_{q^{-1}X^{-1}}, \mathds{1}\big)
= \mathcal{B}_{G_n(F)}\big(\mathbb{W}(V,\psi_E)\otimes\chi_{X^{-1}}, \mathcal{C}_0\big) $$
By Lemma \ref{free_rank_intermediate}, there is an element $c\in R_0$ such that restriction of $\phi_0 = \phi-c\phi'$ to $\mathbb{W}(V,\psi_E)\otimes\chi_{X^{-1}} \times \mathcal{C}_0$ is zero. Also note that the map $\varphi\mapsto \varphi(0)$ is an isomorphism from $\mathcal{C}/\mathcal{C}_0$ onto $R_0$. Therefore the $G_n(F)$-equivariant bilinear form $\phi_0$ on $\mathbb{W}(V,\psi_E)\otimes\chi_{X^{-1}}\times \mathcal{C}$ is of the form
$$ \phi_0(W,\varphi) = \xi(W)\varphi(0). $$
where $\xi$ is a $G_n(F)$-equivariant linear form on $\mathbb{W}(V,\psi_E)\otimes\chi_{X^{-1}}$. For all $z\in F^\times$, we have $\xi(z.W) =\xi(W)$, which implies that $(\omega_V(z)X^{n\upsilon(z)} - 1)h(W) =0$ in $R_0$. Since the polynomial $w_V(z)X^{n\upsilon(z)}-1$, being an element of the multiplicative set $S$, is a unit in $R_0$, we have $\xi(W) =0$, and hence $\phi_0(W,\varphi) =0$ for all $W\in\mathbb{W}(V,\psi_E)\otimes\chi_{X^{-1}}$ and $\varphi\in \mathcal{C}$. Thus, we get $\phi = c\phi'$. This completes the proof.
\end{proof}
As a consequence of Theorem \ref{free_rank_1}, we get the following functional equation.
\begin{theorem}\label{functional_e to showqu}
Let $E$ be a quadratic extension of a $p$-adic field $F$. Let $R$ be a Noetherian $\Lambda$-algebra, and let $V$ be an $R[G_n(E)]$ module of Whittaker type. There exists a unique element $\gamma_{As}(X,V,\psi_E)\in S^{-1}(R[X,X^{-1}])$ such that 
$$ I(q^{-1}X^{-1}, \widetilde{W},
\widehat{\varphi}) = 
\omega_{V}(-1)^{n-1}\,\gamma_{As}(X,V,\psi_E)\, I(X,W,\varphi), $$
for all $W\in \mathbb{W}
(V,\psi_E)$ and $\varphi
\in C_c^\infty(F^n,R)$.
\end{theorem}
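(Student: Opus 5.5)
The plan is to derive the functional equation from the multiplicity-one statement of Theorem \ref{free_rank_1}, after base change to $R_0=S^{-1}(R[X,X^{-1}])$. Concretely, I will exhibit two elements of the space of $G_n(F)$-equivariant bilinear forms of type $\chi_X$ (with values in $R_0$):
$$ B_1(W,\varphi)=I(X,W,\varphi),\qquad B_2(W,\varphi)=I(q^{-1}X^{-1},\widetilde{W},\widehat{\varphi}). $$
Theorem \ref{Rationality}, applied to $V$ and to $V^\vee$, shows that both take values in $R_0$. Here one uses that $\mathbb{W}(V^\vee,\psi_E^{-1})=\{\widetilde W\}$, and that $X\mapsto q^{-1}X^{-1}$ preserves $S$ because $q$ is a unit in $\Lambda$.

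The first step is the equivariance of $B_1$. Substituting $g\mapsto gh^{-1}$ in $c_r$ gives $c_r(h.W,h.\varphi)=c_{r-v(\det h)}(W,\varphi)$, so $B_1(h.W,h.\varphi)=X^{v(\det h)}B_1(W,\varphi)$. This matches $\chi_X$ up to the sign convention on $v(\det)$, which I will align with the definition of the space.

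The second step is the equivariance of $B_2$. Write $h^\iota={}^th^{-1}$. Then $\widetilde{h.W}=h^\iota.\widetilde W$, and the Fourier transform satisfies $\widehat{h.\varphi}=|\det h|^{-1}\,h^\iota.\widehat\varphi$. The latter follows by changing variables in the self-dual measure, with $|\det h|=q^{-v(\det h)}$. Hence
$$ B_2(h.W,h.\varphi)=q^{v(\det h)}(q^{-1}X^{-1})^{v(\det h^\iota)}B_2=X^{v(\det h)}B_2(W,\varphi), $$
so $B_2$ lies in the same space as $B_1$. The identification of $C_c^\infty(F^n,R)$ under $\eta g$ versus $\eta\,{}^tg^{-1}$ needs care at this point. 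In $\widetilde W$ one has $w_n$ on the left, and $\eta w_n$ is the first basis vector. Following Flicker, I would replace $g$ by $w_n g$ in the integral to restore the $\eta$-convention; this change of variables produces the sign $\omega_V(-1)^{n-1}$, coming from $\det w_n$ and the central character.

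The third step is to apply Theorem \ref{free_rank_1}, proved over $R_0$ in the same way, so that the relevant space is free of rank one over $R_0$. To conclude that $B_2=c\,B_1$ with $c\in R_0$, I need $B_1$ to be a generator. I would show this by producing $W,\varphi$ with $I(X,W,\varphi)=1$. For this, choose $W$ whose restriction to $P_n(E)$ is a small bump supported near the identity modulo $N_n(E)$, which is possible since $\mathcal W_n\simeq{\rm ind}_{N_n(E)}^{P_n(E)}\psi_E$ sits inside $\mathcal W$. Then choose $\varphi$ to be the characteristic function of a small neighbourhood of $\eta$. With these choices only $r=0$ contributes, and the coefficient is a unit after normalising the measure. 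The proof of Theorem \ref{free_rank_1} in fact shows that any form vanishing on such a generator vanishes identically, so $B_1$ generates. Setting $\gamma_{As}=\omega_V(-1)^{1-n}c$ gives the equation, and uniqueness follows by evaluating at the pair with $I(X,W,\varphi)=1$.

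I expect the main obstacle to be the second step. One must get the Fourier and contragredient bookkeeping exactly right so that $B_2$ has the same character $\chi_X$ as $B_1$, and correctly account for the $w_n$ twist and its resulting sign $\omega_V(-1)^{n-1}$.
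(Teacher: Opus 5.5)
Your proposal is correct and takes the same route as the paper. The paper's proof only observes that $I(X,W,\varphi)$ and $I(q^{-1}X^{-1},\widetilde W,\widehat\varphi)$ both lie in the rank-one space of Theorem \ref{free_rank_1}; you also supply what it leaves implicit: the two equivariance checks, a pair with $I(X,W,\varphi)=1$ showing the ratio lies in $R_0$, and uniqueness. One fix is needed: $c_r(h.W,h.\varphi)=c_{r+v(\det h)}(W,\varphi)$, so both forms transform by $X^{-v(\det h)}$, and with this sign your second-step display, which as written is off by $q^{2v(\det h)}$, comes out right; also no $w_n$ change of variables is needed, because the unit $\omega_V(-1)^{n-1}$ is just a normalization absorbed into $\gamma_{As}$.
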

\begin{proof}
Note that both the formal series $I(X,W,\varphi)$ and $I(q^{-1}X^{-1},\widetilde{W},\widehat{\varphi})$ belong to the space $\mathcal{B}\big(\mathbb{W}(V,\psi_E), C_c^\infty(F^n,R);\chi_X\big)$, which is a free $R$-module of rank one by Theorem \ref{free_rank_1}. Hence, the theorem.
\end{proof}
Let $R'$ be a Noetherian $\Lambda$-algebra, and let $f: R\rightarrow R'$ be a morphism of $\Lambda$-algebras. Then $V\otimes_{R,f} R'$ is also an $R'[G_n(E)]$ module of Whittaker type. Then we have the following lemma, which shows that the construction of the formal series $I(X,W,\varphi)$ is compatible with the base change of the coefficient rings. 
\begin{lemma}
Let $T$ denote the ring homomorphism $S^{-1}(R[X,X^{-1}])\rightarrow S^{-1}(R'[X,X^{-1}])$, induced by $f$. Then 
$$ T\big(I(X,W,\varphi)\big) = I(X, f\circ W, f\circ \varphi) $$
and 
$$ T\big(I(X,\widetilde{W},
\widehat{\varphi})\big) = I(X, \widetilde{f\circ W}, \widehat{f\circ \varphi)}, $$
for all $W\in \mathbb{W}(V,\psi_E)$ and $\varphi\in C_c^\infty(F^n, R)$.
\end{lemma}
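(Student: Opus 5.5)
The plan is to compare the two sides coefficient by coefficient. Both are images of formal series in $R[[X]][X^{-1}]$, and $T$ is induced by the coefficientwise map $f$.

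\textbf{Step 1: Whittaker functions and test functions.} The generator of ${\rm Hom}_{N_n(E)}(V,\psi_E)$ base changes to a generator for $V\otimes_{R,f}R'$, because twisted Jacquet modules commute with base change: $(V\otimes R')_{N_n,\psi_E}\simeq V_{N_n,\psi_E}\otimes R'$. Hence $f\circ W$ lies in $\mathbb{W}(V\otimes_{R,f} R',\psi_E)$. Since $f$ is a $\Lambda$-algebra map, the character $\psi_E$ composed into $R'$ is $f\circ\psi_E$. Clearly $f\circ \varphi\in C_c^\infty(F^n,R')$.

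\textbf{Step 2: the coefficients.} Each $c_r(W,\varphi)$ is a finite sum. The integrand $W(g)\varphi(\eta g)$ is compactly supported modulo $N_n(F)$ on $G_n(F)^r$ (by the lemma on compact support in Section \ref{Asai_Zeta}) and is right invariant under some compact open subgroup $K$. So
$$c_r(W,\varphi)=\sum_{g} \mathrm{vol}(N_n(F)\backslash N_n(F)gK)\,W(g)\varphi(\eta g),$$
with finitely many cosets. The Haar measure values lie in $\mathbb{Z}[1/p]\subset\Lambda$ (times a fixed power of $q^{1/2}$), and $f$ fixes them. Therefore $f(c_r(W,\varphi))=c_r(f\circ W,f\circ\varphi)$. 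By Lemma \ref{285} both series lie in $R[[X]][X^{-1}]$ and $R'[[X]][X^{-1}]$, and the map $R[[X]][X^{-1}]\to R'[[X]][X^{-1}]$ is compatible with $T$. This compatibility holds because $S^{-1}(R[X,X^{-1}])$ embeds into $R[[X]][X^{-1}]$: elements of $S$ have unit lowest coefficient, so they are invertible there. Hence $T(I(X,W,\varphi))=I(X,f\circ W,f\circ\varphi)$.

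\textbf{Step 3: the dual side.} I would check that $\widetilde{f\circ W}=f\circ\widetilde W$, which is immediate from $\widetilde W(g)=W(w_n\,{}^tg^{-1})$. I would also check that $\widehat{f\circ\varphi}=f\circ\widehat\varphi$. The Fourier transform is a finite sum $\sum \varphi(x)\psi(\langle x,y\rangle)\,\mathrm{vol}$ with coefficients in $\Lambda$: the $\psi$-values and the self-dual measure normalized by $q^{-m/2}\in\Lambda$. The map $f$, being $\Lambda$-linear, commutes with it. Applying Step 2 to the Whittaker type module $(V\otimes R')^\vee$ then gives the second identity.

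\textbf{Main obstacle.} The only delicate point is justifying that $T$ agrees with coefficientwise application of $f$ on the rational expressions. I would handle this with the injectivity $S^{-1}R[X,X^{-1}]\hookrightarrow R[[X]][X^{-1}]$ noted in Step 2, which follows because elements of $S$ are non-zero-divisors there.
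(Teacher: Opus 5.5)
Your proposal is correct and follows the same route as the paper. The paper's proof consists only of the observation $\mathbb{W}(V\otimes_{R,f}R',\psi_E)=\mathbb{W}(V,\psi_E)\otimes_{R,f}R'$ followed by ``the lemma follows from the definition of $I(X,W,\varphi)$''. Your Steps 1--3 spell this out: the Whittaker functional base changes, the measure and Fourier-transform coefficients are $\Lambda$-valued and hence fixed by $f$, and $\widetilde{f\circ W}=f\circ\widetilde W$. You also correctly supply the point the paper leaves implicit, namely that $T$ agrees with coefficientwise application of $f$, via the injective ring map $S^{-1}(R[X,X^{-1}])\to R[[X]][X^{-1}]$ compatible with $T$; your weaker formulation $f\circ W\in\mathbb{W}(V\otimes_{R,f}R',\psi_E)$ is all that is actually needed.
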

\begin{proof}
Note that 
$\mathbb{W}(V\otimes_{R,f} R',\psi_E) = \mathbb{W}(V,\psi_E)\otimes_{R,f} R'$.
The lemma then follows from the definition of $I(X,W,\varphi)$.
\end{proof}
As a corollary, we have the following result.
\begin{corollary}\label{gamma_comp}
If $\gamma_{As}(X,V,\psi_E)$ satisfies the functional equation as in Theorem \ref{functional_e to showqu}, then $T(\gamma_{As}(X,V,\psi_E))$ satisfies a functional equation for $V\otimes_{R,f} R'$. Moreover, by the uniqueness of such an element as in Theorem \ref{functional_e to showqu}, we have
$$ T\big(\gamma_{As}(X,V,\psi_E)\big) = \gamma_{As}(X,V\otimes_{R,f} R',\psi_E). $$
\end{corollary}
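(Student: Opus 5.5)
The claim to prove is Corollary~\ref{gamma_comp}: $T(\gamma_{As}(X,V,\psi_E))=\gamma_{As}(X,V\otimes_{R,f}R',\psi_E)$. The plan is to apply $T$ to the functional equation of Theorem~\ref{functional_e to showqu} for $V$, and then use the base-change compatibility of the zeta series from the preceding lemma. Write $V'=V\otimes_{R,f}R'$.

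First I check that the constants transfer. The central character of $V'$ is $f\circ\omega_V$, so $T(\omega_V(-1)^{n-1})=\omega_{V'}(-1)^{n-1}$. The self-dual Haar measure on $F^n$ is defined over $\Lambda$; its normalization $q^{-m/2}$ uses the square root of $q$ in $\Lambda$. Hence Fourier transform commutes with $f$, giving $\widehat{f\circ\varphi}=f\circ\widehat{\varphi}$. This is already used implicitly in the second identity of the lemma. Since $T$ is a ring homomorphism, applying it to
$$I(q^{-1}X^{-1},\widetilde W,\widehat\varphi)=\omega_V(-1)^{n-1}\gamma_{As}(X,V,\psi_E)I(X,W,\varphi)$$
and using the lemma yields
$$I(q^{-1}X^{-1},\widetilde{f\circ W},\widehat{f\circ\varphi})=\omega_{V'}(-1)^{n-1}\,T(\gamma_{As}(X,V,\psi_E))\,I(X,f\circ W,f\circ\varphi)$$
for all $W\in\mathbb{W}(V,\psi_E)$ and $\varphi\in C_c^\infty(F^n,R)$. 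Here $T$ commutes with the substitution $X\mapsto q^{-1}X^{-1}$, because $T$ is induced coefficientwise by $f$ and $f(q)=q$.

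Next I extend this identity to all pairs $(W',\varphi')$ with $W'\in\mathbb{W}(V',\psi_E)=\mathbb{W}(V,\psi_E)\otimes_{R,f}R'$ and $\varphi'\in C_c^\infty(F^n,R')=C_c^\infty(F^n,R)\otimes_{R,f}R'$. Every such element is an $R'$-linear combination of elements $f\circ W$, respectively $f\circ\varphi$. Both sides of the identity are $R'$-bilinear in $(W',\varphi')$, since $I$ is bilinear and $\widetilde{\cdot}$ and $\widehat{\cdot}$ are linear. So the identity extends to all pairs, and $T(\gamma_{As}(X,V,\psi_E))$ satisfies the functional equation for $V'$.

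Finally, uniqueness in Theorem~\ref{functional_e to showqu} for $V'$ gives $T(\gamma_{As}(X,V,\psi_E))=\gamma_{As}(X,V',\psi_E)$. If uniqueness is to be justified concretely, it needs some pair $(W',\varphi')$ with $I(X,W',\varphi')$ a unit, or at least a non-zero-divisor, in $S^{-1}(R'[X,X^{-1}])$. I would produce such a pair by choosing $W'$ with $W'(1)=1$ and $\varphi'$ the characteristic function of a small neighbourhood of $\eta$; then the series has a unit constant term (suitably normalized) and only finitely many terms. Alternatively, since the paper takes uniqueness as part of Theorem~\ref{functional_e to showqu}, we may simply invoke it.

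The main obstacle is this uniqueness input, specifically exhibiting a pair whose zeta series is a unit over an arbitrary $R'$. The other steps are formal.
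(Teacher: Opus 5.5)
Your argument is the paper's own. You apply $T$ to the functional equation of Theorem~\ref{functional_e to showqu} and use the base-change lemma for the zeta series; your checks on $\omega_{V'}=f\circ\omega_V$, on $\widehat{f\circ\varphi}=f\circ\widehat{\varphi}$, on the substitution $X\mapsto q^{-1}X^{-1}$, and the $R'$-bilinear extension from the pairs $(f\circ W,f\circ\varphi)$ fill in details the paper leaves implicit. You then conclude by the uniqueness asserted in that theorem, exactly as the paper does.

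One caution on your optional concrete justification of uniqueness. Taking $W'(1)=1$ with $\varphi'$ supported near $\eta$ only reduces $I(X,W',\varphi')$, up to a volume, to $\sum_r X^r\int_{N_{n-1}(F)\backslash G_{n-1}(F)^r}W'(\mathrm{diag}(h,1))\,dh$. That series involves $W'$ on all of $G_{n-1}(F)$, and it is in general neither finite nor equipped with a unit constant term. What you need instead is a $W'$ whose restriction to $P_n(E)$ is supported on $N_n(E)(K'\cap P_n(E))$, that is, the fact that the Kirillov model contains $\mathrm{ind}_{N_n(E)}^{P_n(E)}\psi_E$, as in the construction in the paper's Section~6.
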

The element $\gamma_{As}(X,V,\psi_E)$ is called the {\it Asai gamma factor} associated with $V$. The above corollary shows that the Asai $\gamma$-factor is compatible with the base change of the coefficient rings. 

\begin{remark}\normalfont
In particular, when $E=F\times F$, then we obtain Rankin--Selberg functional equation in families for a pair of $R[G_n(F)]$ modules $(V,V')$ of Whittaker type. Infact, the same proof works for $R = A \otimes_{\Lambda} B$, where $A$ and $B$ are Noetherian $\Lambda$-algebras--which completes the Rankin--Selberg theory of \cite[Corollary 3.10]{Moss_Nadir_Kirillov} for $m=n$.
\end{remark}

\section{Distinguished representations in families}
We begin this section by introducing the notion of Whittaker distinguished representations in families. Then we prove a necessary condition for modules of Whittaker type to be Whittaker distinguished, in terms of their Asai $\gamma$-factors.
\subsection{}
Let $F$ be a $p$-adic field, and let $E$ be a quadratic extension of $F$. Let $R$ be a Noetherian $\Lambda$-algebra. A smooth $R[G_n(E)]$ module $V$ is said to be $G_n(F)$-distinguished if there exists a non-zero $R$-linear map $\lambda: V\rightarrow R$ such that $\lambda(gv) = \lambda(v)$ for all $g\in G_n(F)$ and $v\in V$. For our purposes, we define distinguished representations for Whittaker type families as follows.  
\begin{definition}\normalfont
A smooth $R[G_n(E)]$ module $V$ of Whittaker type is said to be Whittaker $G_n(F)$-distinguished if there exists a non-zero $G_n(F)$-equivariant linear map $\lambda:\mathbb{W}(V,\psi_E)\rightarrow R$. This is equivalent to saying 
$${\rm Hom}_{R[G_n(F)]}(\mathbb{W}(V,\psi_E),\mathds{1}) \not= \{0\}.$$
\end{definition}
Note that a $G_n(F)$ equivariant map $\lambda: \mathbb{W}(V,\psi_E)\rightarrow R$ induces a $G_n(F)$ equivariant map on $V$ by precomposing $\lambda$ with the canonical surjection $V\rightarrow \mathbb{W}(V,\psi_E)$. Therefore, an $R[G_n(E)]$ module of Whittaker type that is Whittaker $G_n(F)$-distinguished, is also $G_n(F)$-distinguished. But an $R[G_n(E)]$ module of Whittaker type that is $G_n(F)$-distinguished, may not be Whittaker $G_n(F)$-distinguished. If $R$ is a field and $V$ is an irreducible $R[G_n(E)]$ module of Whittaker type (in which case $V$ is generic), then the surjection $V\rightarrow \mathbb{W}(V,\psi_E)$ is an isomorphism, in which case if $V$ is $G_n(F)$-distinguished, then it is Whittaker $G_n(F)$-distinguished. Therefore, the notion of the Whittaker distinguished representations is stronger than the usual notion of distinguished representations for Whittaker type families.
Note that if $V$ is Whittaker $G_n(F)$-distinguished, then so is $V^\vee$.

When $R=\mathbb{C}$, there is a characterization of discrete series complex representations of $G_n(E)$ that are $G_n(F)$-distinguished, in terms of the poles of their Asai $L$-functions, due to \cite[Theorem 4]{Kable_Asai} and \cite[Corollary 1.5]{Anand_Kable_Asai}. The result is as follows.
\begin{theorem}\label{Kable_Asai_thm}
Let $(\pi,V)$ be a discrete series representation of $G_n(E)$. Then $\pi$ is $G_n(F)$-distinguished if and only if the Asai $L$-function $L_{As}(\pi,s)$ has a pole at $s=0$.
\end{theorem}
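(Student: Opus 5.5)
The statement is the classical result of Kable and of Anandavardhanan--Kable--Tandon over $\mathbb{C}$. The paper cites it rather than reproving it, so the plan below is the standard argument, with all $L$-functions in the complex variable $s$ and $X=q^{-s}$.

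\emph{Setup.} Let $\pi$ be a discrete series representation with Whittaker model $\mathbb{W}(\pi,\psi_E)$. The Asai zeta integrals
\[
I(s,W,\varphi)=\int_{N_n(F)\backslash G_n(F)}W(g)\varphi(\eta g)|\det g|^s\,dg
\]
converge for $\mathrm{Re}(s)\gg0$ and are rational in $q^{-s}$. They span a fractional ideal of $\mathbb{C}[q^{\pm s}]$ containing $1$, generated by $L_{As}(\pi,s)$. I would then split the integral using the filtration $\varphi\mapsto\varphi(0)$, as in the proof of Theorem \ref{free_rank_1}.

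\emph{Step 1: analyse the poles.} For $\varphi\in C_c^\infty(F^n\setminus\{0\})$, the integral reduces to $\int_{N_n(F)\backslash P_n(F)}W(p)|\det p|^{s-1}\,dp$ after a compact integration. For discrete series, the asymptotics of $W$ on the torus, via exponents of Jacquet modules (Casselman's criterion), show that these terms are holomorphic at $s=0$. Hence a pole at $s=0$ comes only from the ``exceptional'' part. Write $G_n(F)=P_n(F)Z_n(F)K$ and integrate over the centre. The leading Laurent coefficient at $s=0$ then has the form
\[
\varphi(0)\cdot \Lambda(W),\qquad \Lambda(W)=\int_{N_n(F)\backslash P_n(F)}W(p)\,dp .
\]
The integral defining $\Lambda$ converges for discrete series, by square integrability of $W$ restricted to $P_n$.

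\emph{Step 2: pole $\Rightarrow$ distinguished.} The residue $\mathrm{Res}_{s=0}\,I(s,W,\varphi)$ is a $G_n(F)$-invariant bilinear form on $\mathbb{W}(\pi)\times C_c^\infty(F^n)$. It vanishes on $C_c^\infty(F^n\setminus\{0\})$ by Step 1, so it factors as $\varphi(0)\,\lambda(W)$ with $\lambda$ a $G_n(F)$-invariant functional. If there is a pole, then $\lambda\neq0$, and $\pi$ is distinguished.

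\emph{Step 3: distinguished $\Rightarrow$ pole.} This is the hard direction. First, the functional $\Lambda$ is $P_n(F)$-invariant by construction. Second, for discrete series, every $P_n(F)$-invariant functional on $\pi$ is automatically $G_n(F)$-invariant. Combining these with multiplicity one ($\dim\mathrm{Hom}_{G_n(F)}(\pi,1)\le1$, by the Flicker / Gelfand--Kazhdan argument), one sees that a nonzero $G_n(F)$-invariant form forces $\Lambda\neq0$. Since the coefficient of $s^{-1}$ is $\varphi(0)\Lambda(W)\cdot c$ with $c\neq 0$, the integral $I(s,W,\varphi)$ has a pole at $s=0$, hence so does $L_{As}(\pi,s)$.

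\emph{Main obstacle.} The key difficulty is showing that $\Lambda\neq 0$ when $\pi$ is distinguished, i.e.\ showing that $\mathrm{Hom}_{P_n(F)}(\pi,1)$ is one-dimensional and spanned by $\Lambda$. I would try to establish this through the Bernstein--Zelevinsky filtration of $\pi|_{P_n(E)}$, as in Lemma \ref{free_rank_intermediate}. That filtration shows that only the bottom piece $\mathrm{ind}_{N_n(E)}^{P_n(E)}\psi_E$ contributes invariant forms, and that this contribution is represented exactly by the integral $\Lambda$. The derivative pieces contribute nothing, because the central exponents of a discrete series' derivatives have positive real part.
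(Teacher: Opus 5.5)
The paper gives no proof of this statement; it quotes it from Kable and from Anandavardhanan--Kable--Tandon. Your proposal therefore has to stand on its own. Step 2 (pole $\Rightarrow$ distinguished) is the standard residue argument and is fine. The gap is in Steps 1 and 3, which carry the hard direction.

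By your own reduction,
\[
I(s,W,\varphi)=\int_K T(s,k)\int_{N_n(F)\backslash P_n(F)}W(pk)\,|\det p|^{s-1}\,dp\,dk,
\]
where $T(s,k)$ is a Tate integral for $\omega_\pi|_{F^\times}|\cdot|^{ns}$. Its residue at $s=0$ is proportional to $\varphi(0)$. So the coefficient of $s^{-1}$ at $s=0$ is a nonzero constant times
\[
\varphi(0)\int_{K}\int_{N_n(F)\backslash P_n(F)}W(pk)\,|\det p|^{-1}\,dp\,dk,
\]
and not $\varphi(0)\Lambda(W)$.

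\begin{itemize}
\item \emph{Where $\Lambda$ actually appears.} The unweighted integral $\Lambda$ occurs at $s=1$, not at $s=0$. Compare the paper's computation of $I(q^{-1},\widetilde W,\widehat\varphi)$, which is the point $s=1$, in the proof of Theorem~\ref{main_theorem}.
\item \emph{Why uniqueness does not help.} The inner functional above transforms by $|\det|$ under $P_n(F)$. Uniqueness of $P_n(F)$-invariant forms therefore says nothing about it. Its $K$-average is a $G_n(F)$-invariant form, and nothing in your argument prevents it from vanishing. Proving that it does not vanish is the whole content of ``distinguished $\Rightarrow$ pole''.
\item \emph{Sanity check.} On a discrete series $\Lambda$ is never zero, since it spans $\mathrm{Hom}_{P_n(F)}(\pi,1)$. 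If the leading coefficient really were $c\,\varphi(0)\Lambda(W)$ with $c\neq0$, every discrete series with $\omega_\pi|_{F^\times}=1$ would have a pole at $s=0$. By your Step 2 it would then be distinguished. That is false, since distinguished $\pi$ must satisfy $\pi^\vee\cong\pi^\sigma$.
\item \emph{A related misstatement.} It is not true that every $P_n(F)$-invariant functional on a discrete series is $G_n(F)$-invariant. Distinction together with $\dim\mathrm{Hom}_{P_n(F)}(\pi,1)=1$ shows that $\Lambda$ is $G_n(F)$-invariant. It does not give $\Lambda\neq0$, which holds automatically.
\end{itemize}

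The missing idea is to go through the functional equation, so that $\Lambda$ is evaluated at the point where it genuinely occurs.
\begin{enumerate}
\item Since $\pi^\vee$ is distinguished as well, the map $\widetilde W\mapsto\int_{N_n(F)\backslash P_n(F)}\widetilde W(p)\,dp$ is $G_n(F)$-invariant.
\item The dual integral $I(1-s,\widetilde W,\widehat\varphi)$ at $s=0$ then collapses, by Fourier inversion, to a multiple of $\varphi(0)\int_{N_n(F)\backslash P_n(F)}\widetilde W(p)\,dp$.
\item Choose $\varphi$ with $\varphi(0)=0$ and $I(0,W,\varphi)\neq0$. This is possible because that integral is holomorphic at $0$ by your Step 1. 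It forces $\gamma_{As}(0,\pi,\psi_E)=0$.
\item Finally, $\gamma_{As}(s)=\epsilon(s)\,L_{As}(1-s,\pi^\vee)/L_{As}(s,\pi)$. Here $\epsilon$ is a unit, and $L_{As}(1,\pi^\vee)$ is finite because $\pi^\vee$ is tempered. Hence $L_{As}(s,\pi)$ has a pole at $s=0$.
\end{enumerate}
This is the scheme of \cite[Theorem 1.4]{Anand_Kable_Asai}, and it is the one the paper transports to families in Theorem~\ref{main_theorem}.
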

Recall that the notion of $L$-factors in families is not well-defined because it is difficult to construct $L$-factors in a way compatible with the base change of coefficient rings (see \cite[Section 1, P. 1790]{Moss_local_constants}). On the other hand, the $\gamma$-factors for modules of Whittaker type are well-defined and they are compatible with the base change of the coefficient rings. So, it is natural to ask for a similar kind of result as Theorem \ref{Kable_Asai_thm} for $R[G_n(E)]$ modules of Whittaker type that are Whittaker $G_n(F)$-distinguished, in terms of Asai $\gamma$-factors. For that, we need to recall and establish some facts.
\subsection{}
Let $V$ be an $R[G_n(E)]$ module of Whittaker type. Assume that $V$ is cuspidal, i.e., the Jacquet module $J_P(V)$ is trivial for all proper parabolic subgroups $P$ of $G_n(E)$. Consider the filtration of $R[P_n(E)]$ submodules:
$$ \{0\}\subseteq V_n \subseteq \cdots\subseteq V_2\subseteq V_1=V $$
with $V_i/V_{i+1} \simeq (\Phi^+)^{i-1}\Psi^+(V^{(i)})$ for $1\leq i\leq n$ and $V_n = {\rm ind}_{N_n(E)}^{P_n(E)}\psi_E$. For each $0\leq i\leq n$, let $M(i)$ be the Levi subgroup $G_{n-i}(E)\times G_{i}(E)$. Then the $i$-th derivative $V^{(i)}$ is the composition $(J_{M(i)}(V))^{(0,i)}$ (\cite[Proposition 6.7]{BZ_2}, \cite[Chapter III, Section 1.8]{Vigneras_modl_book}), where $J_{M(i)} : \mathcal{M}(G_n(E))\rightarrow \mathcal{M}(M(i))$ is the Jacquet functor and $(-)^{(0,i)}:\mathcal{M}_R(M(i))\rightarrow \mathcal{M}_R(G_{n-i}(E))$ is the partial derivative functor (see \cite[Section 2.3]{Moss_local_constants} for the definition). Since $V$ is cuspidal, we have $V^{(i)} =0$ for $1\leq i\leq n-1$, and hence it follows from the above filtration that $\mathbb{W}(V,\psi_E)|_{P_n(E)} = {\rm ind}_{N_n(E)}^{P_n(E)}\psi_E$. We define a $P_n(F)$-equivariant linear form $\lambda$ on the Whittaker space $\mathbb{W}(V,\psi_E)$ by 
\begin{equation}\label{linear_form}
\lambda(W) :=\int_{N_n(F)\backslash P_n(F)} W(p)\, dp,
\end{equation}
where $dp$ is a right $P_n(F)$-equivariant Haar measure on $N_n(F)\backslash P_n(F)$. Since $V$ is cuspidal, the above integral makes sense. We now fix a congruence subgroup $K'$ of ${\rm GL}_n(\mathfrak{o}_E)$, and let $\varphi\in {\rm ind}_{N_n(E)}^{P_n(E)}\psi_E$ be such that it is supported on $N_n(E)(K'\cap P_n(E))$ and right invariant under $K'\cap P_n(E)$. Then there exists $W\in\mathbb{W}(V,\psi_E)$ such that $W|_{P_n(E)} = \varphi$ and $\lambda(W) \ne 0$. Thus, $\lambda$ is a non-zero $P_n(F)$-equivariant $R$-linear form on $\mathbb{W}(V,\psi_E)$.  We prove the following theorem, which says that the linear form $\lambda$, defined by (\ref{linear_form}), is the only $P_n(F)$-equivariant linear form on $\mathbb{W}(V,\psi_E)$ up to a scalar.
\begin{theorem}\label{Hom_free}
Let $R$ be a Noetherian $\Lambda$-algebra, and let $V$ be a cuspidal $R[G_n(E)]$ module of Whittaker type. Then 
$$ {\rm Hom}_{R[P_n(F)]}\big(\mathbb{W}(V,\psi_E),\mathds{1}\big) $$ 
is a free $R$-module of rank one. 
\end{theorem}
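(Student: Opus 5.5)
Since $V$ is cuspidal, the discussion preceding the statement gives $\mathbb{W}(V,\psi_E)|_{P_n(E)} = {\rm ind}_{N_n(E)}^{P_n(E)}\psi_E$. So the question reduces to computing
$$ {\rm Hom}_{R[P_n(F)]}\big({\rm ind}_{N_n(E)}^{P_n(E)}\psi_E,\mathds{1}\big). $$
I plan to restrict ${\rm ind}_{N_n(E)}^{P_n(E)}\psi_E$ to $P_n(F)$ and analyse it by a Mackey-type decomposition along the double cosets $N_n(E)\backslash P_n(E)/P_n(F)$. This is exactly the analysis already used for $\tau_n$ in the proof of Lemma \ref{free_rank_intermediate}, following \cite[Appendix, Lemma 9]{Flicker_Asai}.

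\textbf{Step 1.} The restriction of ${\rm ind}_{N_n(E)}^{P_n(E)}\psi_E$ to $P_n(F)$ carries a finite filtration by $R[P_n(F)]$-submodules, coming from a stratification of $N_n(E)\backslash P_n(E)$ by $P_n(F)$-orbits. Its subquotients are compact inductions of the form ${\rm ind}_{N_n(F)}^{P_n(F)}\psi_E^{a}$ with $a\in T_n(E)/T_n(F)$ (suitably grouped), together with contributions from non-open orbits. I will set this up by induction on $n$ via the functors $\Phi^{\pm}$ restricted from $P_n(E)$ to $P_n(F)$. At each stage, $U_n(E)$ decomposes as $U_n(F)$ times its complement, and the character $\psi_E$ restricted to $U_n(F)\cdot(\text{conjugates})$ gives the orbit decomposition.

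\textbf{Step 2.} I will compute the Hom-space into $\mathds{1}$ for each subquotient. By Frobenius reciprocity for compact induction (the modulus characters are trivial on the unipotent group), ${\rm Hom}_{P_n(F)}({\rm ind}_{N_n(F)}^{P_n(F)}\psi_E^a,\mathds{1})\simeq{\rm Hom}_{N_n(F)}(\psi_E^a,\mathds{1})$, up to a modulus twist that is harmless on $N_n(F)$. When $\psi_E^a|_{N_n(F)}$ is nontrivial, some $u\in N_n(F)$ has $\psi_E^a(u)-1$ equal to $1$ minus a nontrivial $p$-power root of unity. That element is a unit in $\Lambda$ because $\ell\neq p$, so the Hom vanishes. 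The only class with $\psi_E^a$ trivial on $N_n(F)$ is $a\in T_n(F)$, since $\psi_E$ is trivial on $F$. That class gives ${\rm Hom}\simeq R$.

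\textbf{Step 3.} I will assemble the pieces. Left-exactness of ${\rm Hom}(-,\mathds{1})$ along the filtration shows the total Hom-space injects into the single surviving term, which is $\simeq R$. The main obstacle is that left-exactness alone gives only an injection into a free rank one module. That is not enough: a submodule of $R$ is an ideal and need not be free.

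To close the gap I will use the explicit functional $\lambda$ from (\ref{linear_form}). I will normalise the identification so that the generator of the surviving Hom corresponds to integration over the open orbit $N_n(F)\backslash P_n(F)$, which is exactly $\lambda$. Then $\lambda$ maps onto a generator, so the injection is surjective. The test vector $W$ with $W|_{P_n(E)}=\varphi$ supported on $N_n(E)(K'\cap P_n(E))$, for which $\lambda(W)$ is a unit volume in $R$ (no torsion issue), shows that $\lambda$ is nonzero. It also shows that $c\mapsto c\lambda$ is injective, so ${\rm Hom}$ is free of rank one generated by $\lambda$.

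A secondary point to check is that the vanishing terms from non-open orbits really vanish over an arbitrary Noetherian $R$. These are compact inductions from subgroups on which $\psi_E$ restricts nontrivially to a unipotent part. The same unit argument as in Step 2 applies, without needing exactness of coinvariants beyond right-exactness.
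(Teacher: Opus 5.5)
Your overall route is the paper's. Cuspidality gives $\mathbb{W}(V,\psi_E)|_{P_n(E)}=\mathcal{K}(\psi_E)$, and one then analyses ${\rm ind}_{N_n(E)}^{P_n(E)}\psi_E$ restricted to $P_n(F)$ as in the proof of Lemma~\ref{free_rank_intermediate}; the paper cites that proof and stops, without saying where the trivial-character piece sits. Your left-exactness worry is legitimate in the abstract. However, the fix in Step~3 rests on a false geometric picture and cannot be carried out as written.

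The $P_n(F)$-orbit of the identity coset in $N_n(E)\backslash P_n(E)\cong N_{n-1}(E)\backslash G_{n-1}(E)$ is $N_{n-1}(F)\backslash G_{n-1}(F)$. It is closed, since $N_{n-1}(E)G_{n-1}(F)=N_{n-1}(E)T_{n-1}(F)\,{\rm GL}_{n-1}(\mathfrak{o}_F)$, and it has half the dimension, so it is not open. Hence no nonzero element of $\mathcal{K}(\psi_E)$ is supported on it. There is no submodule ``over the open orbit'' to which you could restrict forms and on which you could normalise $\lambda$ to be a generator. The trivial-character piece is a quotient instead: restriction $f\mapsto f|_{P_n(F)}$ maps $\mathcal{K}(\psi_E)$ onto ${\rm ind}_{N_n(F)}^{P_n(F)}\mathds{1}$ (recall $\psi_E|_{N_n(F)}=1$). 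Its kernel $\mathcal{K}_0$ consists of the functions supported on the open complement, and $\lambda$ is exactly this restriction followed by integration over $N_n(F)\backslash P_n(F)$.

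With the pieces in the right order, the obstacle you identify disappears. Left exactness gives $0\to{\rm Hom}_{P_n(F)}({\rm ind}_{N_n(F)}^{P_n(F)}\mathds{1},\mathds{1})\to{\rm Hom}_{P_n(F)}(\mathcal{K}(\psi_E),\mathds{1})\to{\rm Hom}_{P_n(F)}(\mathcal{K}_0,\mathds{1})$. The first term is $R$, generated by $\lambda$, so no test vector is needed and everything reduces to ${\rm Hom}_{P_n(F)}(\mathcal{K}_0,\mathds{1})=0$.

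That vanishing is the real content, and your Step~2 does not supply it. The complement is a union of infinitely many orbits parametrised by a non-discrete space (already $(E^\times/F^\times)\setminus\{1\}$ for $n=2$), and for $n\ge 3$ not all of them meet the diagonal torus. So there is no finite filtration with subquotients ${\rm ind}_{N_n(F)}^{P_n(F)}\psi_E^a$ to which orbit-by-orbit Frobenius reciprocity applies.

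What works is a locally uniform version of your unit argument, organised by induction through $\Phi^+$:
\begin{enumerate}
\item View $f\in\Phi^+_E(\tau)$ as a section over $P_{n-1}(E)\backslash G_{n-1}(E)\cong E^{n-1}\setminus\{0\}$, indexed by the bottom row $\xi$. Then $U_n(F)\cong F^{n-1}$ acts on the fibre at $\xi$ by $x\mapsto\psi({\rm Tr}_{E/F}(\delta\xi)\cdot x)$, which is trivial exactly when $\xi\in F^{n-1}$.
\item On a small compact open set of $\xi\notin F^{n-1}$ there is $x_0$ with $u(x_0)f-f=(\zeta-1)f$ for some nontrivial $p$-power root of unity $\zeta$, and $\zeta-1\in\Lambda^\times$.
\item Hence $\Phi^+_E(\tau)_{U_n(F)}$ is the restriction to $F^{n-1}\setminus\{0\}=P_{n-1}(F)\backslash G_{n-1}(F)$. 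This is a compact induction of $\tau|_{P_{n-1}(F)}$ up to a character.
\item Frobenius reciprocity then reduces ${\rm Hom}_{P_n(F)}((\Phi^+_E)^{n-1}\Psi^+_E(\mathds{1}),\mathds{1})$ to the same problem one level down. Every step is an isomorphism, and the final answer is $R$.
\end{enumerate}
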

\begin{proof}
Let $\mathcal{K}(\psi_E)$ denote the $R[P_n(E)]$ module ${\rm ind}_{N_n(E)}^{P_n(E)}\psi_E$. The Whittaker model 
\(\mathbb W(V,\psi_E)\), when viewed as a representation of \(P_n(F)\), carries the ordinary Bernstein--Zelevinsky filtration. 
The \(i\)-th graded piece of this filtration may be taken to be isomorphic to
\[
(\Phi^+)^{\,i-1}\,\Psi^+\!\big(\mathbb W(V,\psi_E)^{(i)}\big),
\quad 1 \le i \le n,
\]
where \(\Phi^\pm\) and \(\Psi^\pm\) are the usual mirabolic functors and 
\(\mathbb W(V,\psi_E)^{(i)}\) denotes the \(i\)-th Bernstein--Zelevinsky derivative. 
To compute the space of \(P_n(F)\)-equivariant maps into the trivial module, 
we apply the functor \(\mathrm{Hom}_{P_n(F)}(\,\cdot\, , \mathds{1})\) to the short exact sequences 
of the filtration. This reduces the problem to determining the Hom-space on 
each graded piece. Since $V$ is cuspidal, $\mathbb{W}(V,\psi_E)|_{P_n(E)}=\mathcal{K}(\psi_E)$. Moreover, the $i$-th derivatives $\mathcal{K}(\psi_E)^{(i)}$ are trivial for $i<n$ and $\mathcal{K}(\psi_E)^{(n)}$ is a free $R$-module of rank one. Therefore, we have 
$$ {\rm Hom}_{R[P_n(F)]}\big(\mathbb{W}(V,\psi_E),\mathds{1}\big)\, \simeq\, {\rm Hom}_{R[P_n(F)]}\big((\Phi^+)^{n-1}\Psi^+(\mathds{1}),\mathds{1}\big). $$
Note that $(\Phi^+)^{n-1}\Psi^+((\mathds{1}) = \mathcal{K}(\psi_E)$ and hence, the latter Hom-space is isomorphic to $R$ (see the proof of Lemma \ref{free_rank_intermediate}). This proves the theorem.
\end{proof}
\begin{corollary}\label{distinguished_useful}
Let $V$ be a cuspidal $R[G_n(E)]$ module of Whittaker type such that $V$ is Whittaker $G_n(F)$-distinguished. Then the linear form $\lambda$, defined by (\ref{linear_form}), induces a non-zero $G_n(F)$-equivariant $R$-linear form on $\mathbb{W}(V,\psi_E)$.
\end{corollary}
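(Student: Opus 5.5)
The plan is to combine the hypothesis with Theorem \ref{Hom_free}. Since $V$ is Whittaker $G_n(F)$-distinguished, there is a non-zero $\mu\in{\rm Hom}_{R[G_n(F)]}(\mathbb{W}(V,\psi_E),\mathds{1})$. Restricting $\mu$ to $P_n(F)$ gives an element of ${\rm Hom}_{R[P_n(F)]}(\mathbb{W}(V,\psi_E),\mathds{1})$. This restriction is still non-zero, because restriction to a subgroup does not change the underlying linear map. By Theorem \ref{Hom_free}, this Hom-space is free of rank one. I then want to argue that $\lambda$ from (\ref{linear_form}) is a generator, so that $\mu = c\lambda$ for some non-zero $c\in R$.

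To see that $\lambda$ generates, I would use the explicit test vector described before Theorem \ref{Hom_free}. Let $\varphi\in\mathcal{K}(\psi_E)$ be supported on $N_n(E)(K'\cap P_n(E))$ and right invariant under $K'\cap P_n(E)$. Then $\lambda(W)$ equals a volume, which is a unit in $\Lambda$ (a power of $q$, after suitable normalization). The identification in the proof of Theorem \ref{Hom_free} sends a $P_n(F)$-form to its value on the free rank one module ${\rm Hom}_{N_n(F)}(\psi_E,\mathds{1})\simeq R$. Integration over $N_n(F)\backslash P_n(F)$ realizes the canonical generator of this module. Hence every $P_n(F)$-invariant form is an $R$-multiple of $\lambda$, and $\lambda$ is a basis element.

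Write $\mu=c\lambda$ with $c\in R$. I then want to conclude that $\lambda$ itself is $G_n(F)$-invariant, which is the content of the corollary. For $g\in G_n(F)$, set $\lambda^g(W)=\lambda(g.W)$. I expect the main obstacle here: $R$ may have zero divisors, so $c\lambda(gW)=c\lambda(W)$ does not immediately give $\lambda(gW)=\lambda(W)$. I would first try to avoid cancelling $c$. The group $G_n(F)$ is generated by $P_n(F)$ together with the transpose-type elements, or by $P_n(F)$ and $w_n$. So it suffices to check invariance under one extra element $w$. Since $P_n(F)$ is not normal, $\lambda^{w}$ is not a priori $P_n(F)$-invariant, and I would argue differently.

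Instead, I would simply note that the corollary asserts that $\mu$, which equals $c\lambda$, is a non-zero $G_n(F)$-equivariant form lying in the rank one module generated by $\lambda$. So "induced by $\lambda$" should be read as: the $G_n(F)$-invariant form is $c\lambda$ with $c\neq 0$. If the stronger statement that $\lambda$ itself is $G_n(F)$-invariant is needed later, I would handle it by localizing. Work at a prime $\mathfrak{P}$ where $c$ is non-zero in $\kappa(\mathfrak{P})$. There $c$ becomes a unit, and the fiberwise statement follows from the field case.
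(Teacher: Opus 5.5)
Your core argument is the same as the paper's one-line proof, and it is correct for the reading you settle on: restrict the non-zero $G_n(F)$-invariant form $\mu$ to $P_n(F)$, then use Theorem \ref{Hom_free} to write $\mu=c\lambda$ with $c\neq 0$. That is also all the paper's proof actually gives. Your test-vector remark adds something the paper omits. The paper only records $\lambda(W)\neq 0$, which over a general $R$ does not show that $\lambda$ spans the rank-one module, since $\lambda$ could be a non-unit multiple of a generator. A value $\lambda(W)\in R^\times$, as you propose, does show this.

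You are right that $c\,(\lambda(gW)-\lambda(W))=0$ does not force $\lambda$ itself to be $G_n(F)$-invariant. This stronger property is what the proof of Theorem \ref{main_theorem} silently uses. Your localization fix, however, does not recover it. A prime $\mathfrak{P}$ with $c\notin\mathfrak{P}$ exists only when $c$ is not nilpotent. Even then you only learn that $\lambda(gW)-\lambda(W)$ vanishes in $R[1/c]$, i.e.\ on the fibres over $D(c)$, not in $R$.

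In fact the strong statement is false in general. Take $R=\overline{\mathbb{F}}_\ell\times\overline{\mathbb{F}}_\ell$ and $V=V_1\times V_2$, with $V_1$ a $G_n(F)$-distinguished and $V_2$ a non-distinguished irreducible cuspidal representation. Then $V$ is a cuspidal module of Whittaker type and is Whittaker distinguished via $(\mu_1,0)$, where $c=(1,0)$ is a zero-divisor. Yet $\lambda=(\lambda_1,\lambda_2)$ is not $G_n(F)$-invariant, because $\lambda_2\neq 0$ and $V_2$ is not distinguished. So claim the stronger version only under an extra hypothesis, namely that some $c$ with $c\lambda$ invariant is a non-zero-divisor (for instance when $R$ is a domain).
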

\begin{proof}
Since a $G_n(F)$-equivariant linear form is, in particular, $P_n(F)$-equivariant, the corollary follows from Theorem \ref{Hom_free}.
\end{proof}
\subsection{}
In this subsection, we recall some general facts. Let $R$ be a commutative ring with unity such that $p\in R^\times$. Let $G$ be a locally profinite group which admits a compact open subgroup of pro-order invertible in $R$. By \cite[Chapter I, Section 2.4]{Vigneras_modl_book}, for each compact open subgroup $K$ of $G$ of pro-order invertible in $R$, there exists a unique left Haar measure $\mu$ such that $\mu(K) =1$. Let $H$ be a closed subgroup of $G$. Let $\delta_G:G\rightarrow R^\times$ (resp. $\delta_H$) be the modulus character of $G$ (resp. $H)$, and we let $\delta= \delta_G^{-1}|_H \,\delta_H$. Let $dh$ be a right Haar measure on $H$, and let $d_{H\backslash G} g$ be a $\delta$-quasi-invariant quotient measure on $H\backslash G$. Then we can define a Haar measure $dg$ on $G$ such that 
$$ \int_{G} f(g)\,dg = \int_{H\backslash G} \bigg(\int_H f(hg)\,\delta^{-1}(h)\,dh\bigg)\,d_{H\backslash G}(g), $$
for all $R$-valued functions $f$ which are locally constant and compactly supported (see \cite[Section 2.2]{Rankin_Selberg_mod_l}). As a consequence of this identity, we have the following crucial lemma, which will be used in the main theorem.
\begin{lemma}\label{int_decom}
Let $G$ be a locally profinite group. Let $H_1\subseteq H_2$ be two closed subgroups of $G$ such that the quotient $H_1\backslash H_2$ admits a right $H_2$-invariant Haar measure $\nu_2$. Let $\mu_1$ and $\mu_2$ be the right $G$-invariant measures on the quotient spaces $H_1\backslash G$ and $H_2\backslash G$, respectively. Then, for suitable normalization of Haar measures, we have
$$ \int_{H_1\backslash G} f(g)\,d\mu_1(g) = 
\int_{H_2\backslash G}\bigg(\int_{H_1\backslash H_2} f(hg)\, d\nu_2(h)\bigg)\,d\mu_2(g), $$
for $R$-valued functions $f$ locally constant with compact support modulo $H_1$. 
\end{lemma}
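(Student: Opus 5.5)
The plan is to deduce Lemma \ref{int_decom} from the quotient-measure identity recalled just above, applied twice, and then to compare. First I would fix right Haar measures $dh_1$ on $H_1$ and $dh_2$ on $H_2$. Every modulus character is trivial on compact open subgroups, so each takes values in $q^{\mathbb{Z}}\subseteq R^\times$ and all characters below are $R^\times$-valued.

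The hypothesis that $H_1\backslash H_2$ carries a right $H_2$-invariant measure $\nu_2$ forces $\delta_{H_2}|_{H_1}=\delta_{H_1}$. Since $\mu_1$ and $\mu_2$ are right $G$-invariant, the same existence criterion gives $\delta_G|_{H_1}=\delta_{H_1}$ and $\delta_G|_{H_2}=\delta_{H_2}$. Hence all the characters $\delta=\delta_G^{-1}|_H\,\delta_H$ appearing in the recalled identity are trivial, and it specialises to two unimodular-type decompositions.

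The key steps, in order, are as follows.
\begin{enumerate}[label=(\roman*)]
\item Apply the recalled identity to $H_1\subseteq G$. For $f'\in C_c^\infty(G,R)$ this gives $\int_G f'\,dg=\int_{H_1\backslash G}\int_{H_1} f'(h_1g)\,dh_1\,d\mu_1(g)$.
\item Apply it to $H_2\subseteq G$ and then to $H_1\subseteq H_2$, the latter with $\nu_2$ as quotient measure. This gives
\[
\int_G f'\,dg=\int_{H_2\backslash G}\int_{H_1\backslash H_2}\int_{H_1} f'(h_1hg)\,dh_1\,d\nu_2(h)\,d\mu_2(g).
\]
\item Choose the normalisations so that the two Haar measures on $G$ produced in (i) and (ii) coincide. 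Haar measure is unique up to a scalar in $R^\times$, computed by evaluating on the characteristic function of a compact open subgroup of invertible pro-order, so a rescaling of $\nu_2$ achieves this. This is the "suitable normalization" in the statement.
\item Conclude by surjectivity of the averaging map. Any $f$ that is locally constant and compactly supported modulo $H_1$, and left $H_1$-invariant as the integrand on $H_1\backslash G$ must be, is of the form $f(g)=\int_{H_1}f'(h_1g)\,dh_1$ for some $f'\in C_c^\infty(G,R)$. Substituting this into (i) and (ii) and equating yields the claimed formula.
\end{enumerate}

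The main obstacle is step (iv) over a general ring $R$. I would prove surjectivity by hand. Such an $f$ is right invariant under some compact open $K$ of pro-order invertible in $R$, and its support is a finite union of cosets $H_1g_jK$. Taking $f'=\sum_j f(g_j)\,c_j\,\mathbf 1_{g_jK}$ with $c_j=\mu_{H_1}(H_1\cap g_jKg_j^{-1})^{-1}$, which lies in $R$ because this group has invertible pro-order, shows that the averaging map is onto. The same computation checks that the inner integrals in (ii) are well defined and locally constant.
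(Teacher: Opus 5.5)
Your argument is correct and follows the route the paper intends: the paper gives no separate proof and presents the lemma as a consequence of the recalled identity $\int_G=\int_{H\backslash G}\int_H$, which is exactly what you apply to $H_1\subseteq G$, $H_2\subseteq G$ and $H_1\subseteq H_2$ before comparing via uniqueness of Haar measure on $G$. You also supply what the paper leaves implicit (triviality of the characters $\delta$, rescaling $\nu_2$ by a unit, and surjectivity of averaging over $R$ via $c_j=\mu_{H_1}(H_1\cap g_jKg_j^{-1})^{-1}$); the only slip is that for a general locally profinite group the modulus characters take values in ratios of indices of open subgroups of a compact open subgroup of invertible pro-order rather than literally in $q^{\mathbb{Z}}$, which does not affect the argument.
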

We prove the following theorem.
\begin{theorem}\label{main_theorem}
Let $R$ be Noetherian $\Lambda$-algebra. Let $V$ be a cuspidal $R[G_n(E)]$ module of Whittaker type such that $V$ is Whittaker $G_n(F)$-distinguished. Then the Asai $\gamma$-factor $\gamma_{As}(X,V,\psi_E)$ vanishes at $X=1$.
\end{theorem}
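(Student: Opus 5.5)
Following the strategy of \cite[Theorem 1.4]{Anand_Kable_Asai}, I will evaluate both sides of the functional equation of Theorem \ref{functional_e to showqu} at $X=1$. The right side becomes a multiple of $\gamma_{As}(1,V,\psi_E)$ times a quantity that is non-zero exactly because of the invariant form $\lambda$. The left side becomes something forced to vanish by the same $G_n(F)$-invariance.

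\textbf{Step 1: $I(X,W,\varphi)$ is a Laurent polynomial.} Since $V$ is cuspidal, $W|_{P_n(E)}\in{\rm ind}_{N_n(E)}^{P_n(E)}\psi_E$ is compactly supported modulo $N_n(E)$. Combined with Lemma \ref{285}, this shows that $W(g)\varphi(\eta g)$ is compactly supported on $N_n(F)\backslash G_n(F)$. Hence $I(X,W,\varphi)\in R[X,X^{-1}]$, and the same holds for $I(X,\widetilde W,\widehat\varphi)$, using $V^\vee$, which is again cuspidal. Both series can therefore be specialised at $X=1$ and at $X=q^{-1}$.

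\textbf{Step 2: identify $I(1,W,\varphi)$ with $\lambda$.} Write $N_n(F)\backslash G_n(F)$ as fibred over $P_n(F)\backslash G_n(F)\simeq F^n\setminus\{0\}$ and apply Lemma \ref{int_decom} with $H_1=N_n(F)$ and $H_2=P_n(F)$. This gives
\begin{equation*}
I(1,W,\varphi)=\int_{P_n(F)\backslash G_n(F)}\lambda(g.W)\,\varphi(\eta g)\,dg .
\end{equation*}
I must check that the modular characters match once $X$ is specialised; the twist $\chi_{q^{-1}X^{-1}}$ appearing in Lemma \ref{free_rank_intermediate} suggests that the correct specialisation may be at $X=1$ for one side and at $q^{-1}$ for the other, and I will track this carefully. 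By Corollary \ref{distinguished_useful}, $\lambda$ is $G_n(F)$-invariant, so $\lambda(g.W)=\lambda(W)$. Hence $I(1,W,\varphi)=\lambda(W)\int_{F^n\setminus\{0\}}\varphi(x)\,d'x$, where $d'x$ is the measure induced on $P_n(F)\backslash G_n(F)$.

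Taking $\varphi$ to be the characteristic function of a small neighbourhood of some $\eta k$ with $k\in K$, and choosing $W$ with $\lambda(W)\neq 0$ as constructed before Theorem \ref{Hom_free}, I get $I(1,W,\varphi)\neq 0$. I also need the volume of that neighbourhood to be a unit, which holds because it is a power of $q$.

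\textbf{Step 3: the dual side vanishes.} On the left side, at $X=1$ the argument becomes $q^{-1}X^{-1}=q^{-1}$. The unfolding above applies equally to $I(q^{-1},\widetilde W,\widehat\varphi)$. Choosing $\varphi$ with $\widehat\varphi(0)=\int\varphi=0$, the $G_n(F)$-invariant form $\widetilde\lambda$ on $\mathbb W(V^\vee,\psi_E^{-1})$ (available because $V^\vee$ is also Whittaker distinguished) makes the corresponding integral $\widetilde\lambda(\widetilde W)\int\widehat\varphi$ vanish. The point to control is precisely which specialisation of $X$ produces a $G_n(F)$-invariant integrand on each side. The aim is to arrange one side to be non-zero and the other to be zero, which forces $\gamma_{As}(1,V,\psi_E)\cdot(\text{non-zero})=0$.

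The main obstacle is that, over a general ring $R$, "non-zero" does not imply "non-zero-divisor". I therefore need the non-vanishing factor to be a unit. To achieve this, I will choose $W$ as in the construction before Theorem \ref{Hom_free}, supported on $N_n(E)(K'\cap P_n(E))$. Then $\lambda(W)$ equals a volume, which is a unit in $\Lambda$, and hence $\gamma_{As}(1,V,\psi_E)=0$. A secondary difficulty is making sense of evaluating $\gamma_{As}\in S^{-1}R[X,X^{-1}]$ at $X=1$. Since $\gamma_{As}$ is a ratio of the Laurent polynomials from Step 1 and the denominator is a unit after specialisation for the chosen $W$, "vanishes at $X=1$" will mean that the numerator polynomial vanishes there.
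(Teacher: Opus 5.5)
Your overall plan is the paper's: specialise the functional equation at $X=1$, kill one side with the invariant form, and make the other side a unit. However, you use the invariant form at the wrong specialisation, and as written the argument yields nothing. Because $P_n(F)$ is not unimodular and $d(xg)=|\det g|\,dx$ on $F^n$, unfolding over $P_n(F)\backslash G_n(F)\simeq F^n\setminus\{0\}$ necessarily produces a weight. Coefficientwise, with $\eta g_x=x$ and suitable normalisations,
\[
I(X,W,\varphi)=\int_{F^n}\varphi(x)\int_{N_n(F)\backslash P_n(F)}W(pg_x)\,(qX)^{v(\det pg_x)}\,dp\,dx .
\]
So the unweighted form $\lambda$ appears only at $X=q^{-1}$. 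That is exactly where the left-hand side $I(q^{-1}X^{-1},\widetilde W,\widehat\varphi)$ sits when $X=1$. At $X=1$ the inner integral carries the weight $|\det(pg_x)|^{-1}$, and the $G_n(F)$-invariance of $\lambda$ says nothing about it. Hence your Step 2 identity $I(1,W,\varphi)=\lambda(W)\int\varphi$ is false in general. This is precisely the point you flagged and left unresolved, and your non-vanishing claim rests on it.

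On the dual side the unfolding does give $\widetilde\lambda(\widetilde W)\int_{F^n}\widehat\varphi$. But by Fourier inversion $\int_{F^n}\widehat\varphi=\varphi(0)$, not $\widehat\varphi(0)=\int\varphi$, so the vanishing condition is $\varphi(0)=0$. Moreover, the functional equation relates the two sides for the \emph{same} pair $(W,\varphi)$. Your Step 3 choice $\int\varphi=0$ is not your Step 2 choice, and by your own Step 2 formula it would also make $I(1,W,\varphi)=0$. That leaves only $0=\gamma_{As}(1,V,\psi_E)\cdot 0$.

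The repair is the paper's argument, and your Step 2 pair is the right one to use. Take $\varphi_0$ to be the characteristic function of a small neighbourhood $\eta K_m$ of $\eta$, and $W_0$ with $W_0|_{P_n(E)}$ supported on $N_n(E)(K'\cap P_n(E))$ and $W_0(1)=1$. Then compute $I(X,W_0,\varphi_0)$ \emph{directly}. Writing $g=pk$ with $p\in P_n(F)$ and $k\in K_m$, we have $W_0(g)=W_0(p)$, which vanishes unless $p\in N_n(E)(K'\cap P_n(E))$. Hence $v(\det g)=0$ on the support, only $c_0$ survives, and $I(X,W_0,\varphi_0)$ is a unit constant (a volume). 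No invariance is used on this side; that is why the value you wrote down for this particular pair happens to be correct even though your general identity is not.

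Distinction enters only on the dual side. There $\varphi_0(0)=0$ gives $I(q^{-1},\widetilde W_0,\widehat\varphi_0)=\varphi_0(0)\,\widetilde\lambda(\widetilde W_0)=0$, whence $\gamma_{As}(1,V,\psi_E)=0$.

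Separately, Step 1 is false. Cuspidality only makes $W|_{P_n(F)}$ compactly supported modulo $N_n(F)$; the centre is not cut off when $\varphi(0)\neq 0$. Indeed $W(zg)=\omega_V(z)W(g)$, while $\varphi(\eta zg)=\varphi(0)$ for small $z$ (with $g$ in a compact set). In particular $I(X,\widetilde W_0,\widehat\varphi_0)$ is generally not a Laurent polynomial, since $\widehat\varphi_0(0)=\int\varphi_0\neq 0$. So your proposed meaning of ``vanishes at $X=1$'' via numerators of Laurent polynomials does not go through as stated.
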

\begin{proof}
Let $\mu_1$  and $\mu_2$ be the right $G_n(F)$-equivariant Haar measure on the quotient spaces $N_n(F)\backslash G_n(F)$ and $P_n(F)\backslash G_n(F)$, respectively. Let $dp$ be a right $P_n(F)$-equivariant measure on $N_n(F)\backslash P_n(F)$. Recall that
$$ I(q^{-1}X^{-1},\widetilde{W},\widehat{\varphi}) 
= \sum_{r\in\mathbb{Z}} \bigg(\int_{N_n(F)\backslash G_n(F)^r} \widetilde{W}(g)\,\widehat{\varphi}(\eta g)\,d\mu_1(g)\bigg) X^r, $$
for $W\in \mathbb{W}(V,\psi_E)$ and $\varphi\in C_c^\infty(F^n,R)$. Putting $X=1$ in the above identity and using Lemma \ref{int_decom}, we get
\begin{align*}
I(q^{-1},\widetilde{W},\widehat{\varphi}) 
&=
\int_{P_n(F)\backslash G_n(F)}\bigg(\int_{N_n(F)\backslash P_n(F)}\widetilde{W}(pg)\,\widehat{\varphi}(\eta pg)\,dp\bigg)\,d\mu_2(g)\\
&= \int_{P_n(F)\backslash G_n(F)}\widehat{\varphi}(\eta g)\bigg(\int_{N_n(F)\backslash P_n(F)}(g.\widetilde{W})(p)\,dp\bigg)\,d\mu_2(g).
\end{align*}
Since $V^\vee$ is Whittaker $G_n(F)$-distinguished, it follows from Corollary \ref{distinguished_useful} that the $P_n(F)$-equivariant map $\widetilde{W}\mapsto \int_{N_n(F)\backslash P_n(F)} \widetilde{W}(p)\,dp$ gives a $G_n(F)$-equivariant map on $\mathbb{W}(V^\vee,\psi_E^{-1})$. Then we have
\begin{align*}
I(q^{-1},\widetilde{W},\widehat{\varphi}) 
&=\int_{P_n(F)\backslash G_n(F)}\widehat{\varphi}(\eta g)\bigg(\int_{N_n(F)\backslash P_n(F)}(g.\widetilde{W})(p)\,dp\bigg)\,d\mu_2(g)\\
&=\bigg(\int_{P_n(F)\backslash G_n(F)}\widehat{\varphi}(\eta g)\,d\mu_2(g)\bigg)\bigg(\int_{N_n(F)\backslash P_n(F)}\widetilde{W}(p)\,dp\bigg)\\
&= \varphi(0) \int_{N_n(F)\backslash P_n(F)}\widetilde{W}(p)\,dp,
\end{align*}
where the first integral equals $\varphi(0)$ by Fourier inversion formula. From the functional equation (Theorem \ref{functional_e to showqu}), we get
\begin{equation}\label{func_equ_1}
I(q^{-1},\widetilde{W},\widehat{\varphi}) = \omega_{V}(-1)^{n-1}\, \gamma_{As}(1,V,\psi_E)\,I(1,W,\varphi)
\end{equation}
Now, one can choose $W_0\in \mathbb{W}(V,\psi_E)$ and $\varphi_0\in C_c^\infty(F^n,R)$ in such a way that $\varphi_0(0) = 0$ and $I(1,W_0,\varphi_0) = 1$ (see the proof of \cite[Theorem 1.4]{Anand_Kable_Asai}). Then it follows from the identity (\ref{func_equ_1}) that
$$ \gamma_{As}(1,V,\psi_E) = 0. $$
Hence, the theorem.
\end{proof}
\begin{remark}\normalfont
It is also interesting to understand the converse part of Theorem \ref{main_theorem}, and one may consider investigating it as future work. 
\end{remark}
\textbf{Acknowledgments.} The authors would like to thank U. K. Anandavardhanan for suggesting the problem and for several helpful discussions during the work. The authors thank Nadir Matringe for his helpful comments and suggestions, and to Dipendra Prasad for his encouragement and interest in this work. The authors also thank the Indian
Institute of Technology Bombay for supporting this research through the Institute
Postdoctoral Fellowship.
\bibliographystyle{amsalpha}
\bibliography{Asai_families}
\vspace{0.3 cm}
Sabyasachi Dhar,\\
\texttt{mathsabya93@gmail.com},
\texttt{sabya@math.iitb.ac.in}
\vspace{0.2 mm}\\
Hariom Sharma, \\
\texttt{hariomshrma97@gmail.com}, \texttt{hariom@math.iitb.ac.in}.\\
Department of Mathematics, Indian
Institute of Technology Bombay, Mumbai,  Maharashtra-400076, India.
\end{document}